\definecolor{lightgray}{gray}{0.9}
\definecolor{lgray}{gray}{0.72156862745}
\newcommand{\mb}[1]{\mathbb{#1}}
\newcommand{\mc}[1]{\mathcal{#1}}
\newcommand{\mf}[1]{\mathfrak{#1}}
\newcommand{\Z}{\mb{Z}}
\newcommand{\Q}{\mb{Q}}
\newcommand{\R}{\mb{R}}
\newcommand{\C}{\mb{C}}
\newcommand{\K}{\mb{K}}
\newcommand{\F}{\mc{F}}
\newcommand{\td}[1]{\widetilde{#1}}
\renewcommand{\bar}[1]{\overline{#1}}
\renewcommand{\int}[1]{\overset{\circ}{#1}}
\newcommand{\de}{\partial}
\newcommand{\fd}{\partial}
\DeclareMathOperator{\Homology}{H}
\renewcommand{\H}{\Homology}
\renewcommand{\epsilon}{\varepsilon}
\newcommand{\vphi}{\varphi}
\renewcommand{\theta}{\vartheta}
\DeclareMathOperator{\HF}{\widehat{HF}}
\DeclareMathOperator{\HFK}{HFK}
\DeclareMathOperator{\Kh}{Kh}
\DeclareMathOperator{\odd}{odd}
\DeclareMathOperator{\SpinC}{Spin^\C}
\def\quotient#1#2{\mathchoice
	{\raisebox{.3ex}{$\mathsurround=0pt\displaystyle #1$}\mkern -1mu/\mkern -1mu\raisebox{-.5ex}{$\mathsurround=0pt\displaystyle #2$}}
	{\raisebox{.3ex}{$\mathsurround=0pt\textstyle #1$}\mkern -1mu/\mkern -1mu\raisebox{-.5ex}{$\mathsurround=0pt\textstyle #2$}}
	{\raisebox{.1ex}{$\mathsurround=0pt\scriptstyle #1$}\mkern -1mu/\mkern -1mu\raisebox{-.3ex}{$\mathsurround=0pt\scriptstyle #2$}}
	{\raisebox{.1ex}{$\mathsurround=0pt\scriptscriptstyle #1$}\mkern -1mu/\mkern -1mu\raisebox{-.1ex}{$\mathsurround=0pt\scriptscriptstyle #2$}}}
\def\bigquotient#1#2{%
    \raise1ex\hbox{$#1$}\Big/\lower1ex\hbox{$#2$}%
}
\let\OLDthebibliography\thebibliography
\renewcommand\thebibliography[1]{
  \OLDthebibliography{#1}
  \setlength{\parskip}{0pt}
  \setlength{\itemsep}{0pt plus 0.3ex}
}
\title{\Large\bfseries\scshape{On $d$-invariants and generalised Kanenobu knots}}
\author{\scshape{Marco Marengon\footnote{\url{m.marengon13@imperial.ac.uk}}}}
\date{}
\newtheorem{proposition}{Proposition}
\newtheorem{theorem}[proposition]{Theorem}
\newtheorem{lemma}[proposition]{Lemma}
\newtheorem{prop}[proposition]{Proposition}
\newtheorem{thm}[proposition]{Theorem}
\theoremstyle{definition}
\newtheorem{defi}[proposition]{Definition}
\newtheorem*{acknowledgements}{Acknowledgements}
\newtheorem*{organisation}{Organisation}
\theoremstyle{remark}
\newtheorem{remarkpro}[proposition]{Remark}
\newtheorem*{remark}{Remark}
\begin{document}

\maketitle

\begin{abstract}
We prove that for particular infinite families of $L$-spaces, arising as branched
double covers, the $d$-invariants defined by Ozsv\'ath and
Szab\'o are arbitrarily large and small. As a consequence, we generalise
a result by Greene and Watson by proving, for every odd number $\Delta \geq 5$,
the existence of infinitely many non-quasi-alternating homologically thin
knots with determinant $\Delta^2$, and a result by Hoffman and Walsh
concerning the existence of hyperbolic weight $1$ manifolds that are not
surgery on a knot in $S^3$.
\end{abstract}

\section*{Introduction}
For a large family of 3-braids $\beta$, Watson constructed in
\cite{watson2006knots} knots $K_\beta(p,q)$ (with $p,q \in \Z$) with the
same determinant, called generalised Kanenobu knots with braid $\beta$.
He then defined subfamilies which also have the same Khovanov homology.
Later, a result by Greene and Watson (cf.~\cite{greenewatson})
and a result by Hedden and Watson (cf.~\cite{hedden2014geography}) allowed
to find infinite families of generalised Kanenobu knots which additionally
share the odd-Khovanov and the knot Floer homologies.
Greene and Watson used this construction in \cite{greenewatson} to prove that
there is a family of homologically thin knots such that the $d$-invariants of
their double branched covers are not bounded from below.
This allowed them to prove that infinitely many knots in that family are not
quasi-alternating. Greene first provided an example of a
non-quasi-alternating thin knot in \cite{counterexample}: subsequently the work of Greene and
Watson produced infinitely many examples of non-quasi-alternating thin knots.

In the present paper, we study other families of generalised Kanenobu
knots. Our main result is the following theorem concerning the $d$-invariants
defined by Ozsv\'ath and Szab\'o in \cite{ozszabsolutely}.
\begin{theorem}
\label{thm:main}
For every $n \geq 2$ there exists a collection of $L$-spaces
$\left\{\Sigma_{m}\right\}_{m \in \Z}$ satisfying $|\H_1(\Sigma_m; \Z)|=(2n+1)^2$,
such that the $d$-invariants do not admit a bound from above or below.
\end{theorem}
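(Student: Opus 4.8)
The plan is to realise each $\Sigma_m$ as the double branched cover $\Sigma(K_\beta(p,q))$ of a suitable generalised Kanenobu knot and then to compute its $d$-invariants directly. For the given $n \geq 2$ I would first fix a $3$-braid $\beta$ in Watson's construction \cite{watson2006knots} so that the determinant of $K_\beta(p,q)$ equals the constant $(2n+1)^2$ for all $p,q$; since the order of $\H_1$ of a double branched cover is the determinant of the knot, this already forces $|\H_1(\Sigma_m;\Z)| = (2n+1)^2$, independently of $m$. I would then reparametrise the twist data $(p,q)$ by a single integer $m \in \Z$, arranging the parametrisation so that negative values of $m$ correspond to the mirror knots $\bar{K}_\beta(p,q)$, whose branched double covers are $-\Sigma_m$.

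The geometric input is an explicit surgery (or plumbing) presentation of $\Sigma_m$. Each twist region of the planar diagram of $K_\beta(p,q)$ lifts, in the double cover, to a rational Dehn surgery whose coefficient is governed by the number of twists, while the remainder of the pattern lifts to a fixed framed link; thus $\Sigma_m$ is surgery on a fixed link $L \subset S^3$ with an $m$-dependent part. I would encode this as a negative definite plumbing tree with at most one bad vertex, so that the algorithm of Ozsv\'ath and Szab\'o applies, $\HF$ is standard, and $\Sigma_m$ is an $L$-space with correction terms computable from the intersection lattice. It is worth stressing where the divergence cannot come from: the factor controlling $|\H_1|$ is held fixed, so the $d$-invariants all lie in a single lattice $\tfrac{1}{4(2n+1)^2}\Z + (\text{shift})$ and unboundedness must come from the numerators, not from shrinking denominators.

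The crux is the divergence itself. Writing $\Sigma_m$ through the surgery formula, the correction term in a chosen $\SpinC$ structure $\mathfrak{s}_m$ splits as a fixed model contribution (from the part of the lattice that fixes $|\H_1|$) minus twice a local invariant $V_{\bullet}$ of the varying knot. Because the twist parameter changes the filtered chain homotopy type, these local invariants grow without bound as $m \to +\infty$, so that $d(\Sigma_m, \mathfrak{s}_m) \to -\infty$ and in particular $\inf_m \min_{\mathfrak{s}} d(\Sigma_m, \mathfrak{s}) = -\infty$. For the opposite direction I would invoke $d(-Y,\mathfrak{s}) = -d(Y,\mathfrak{s})$ together with $\Sigma(\bar{K}) = -\Sigma(K)$: the mirror family indexed by $m < 0$ then has $d(\Sigma_m,\mathfrak{s}_m) \to +\infty$, giving $\sup_m \max_{\mathfrak{s}} d(\Sigma_m,\mathfrak{s}) = +\infty$. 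This is precisely the mechanism used by Greene and Watson \cite{greenewatson} to obtain unboundedness from below; the present statement needs both directions, which is supplied by the mirror symmetry above.

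The main obstacle I anticipate is the quantitative control of the local invariants: one must prove that the contribution of the varying knot genuinely diverges and is not cancelled by the fixed model term, and that this holds uniformly in the auxiliary twist parameter. Concretely this amounts to identifying the filtered complex of the underlying knot family precisely enough to read off the growth of the $V_{\bullet}$ (equivalently, to run the plumbing lattice algorithm and show that the extremal characteristic covector contributes an arbitrarily large square), which is exactly what forces the careful choice of $\beta$ and of the parametrisation $(p,q) = (p(m),q(m))$.
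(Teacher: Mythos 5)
Your global framing is sound---realising the $\Sigma_m$ as branched double covers of generalised Kanenobu knots with fixed determinant $(2n+1)^2$, and exploiting $\Sigma(\bar K)=-\Sigma(K)$ together with $d(-Y,\mf t)=-d(Y,\mf t)$---but both computational engines you propose are inapplicable to these manifolds, and that is exactly where the content of the theorem lies. First, $\Sigma(n,p,q)$ is not a plumbed (graph) manifold: Lemma \ref{lem:hyperbolicity} shows it is hyperbolic for $n,|p|,|q|\gg0$, so no negative-definite plumbing tree exists, with or without bad vertices, and the Ozsv\'ath--Szab\'o lattice algorithm never gets off the ground. (Even where a plumbing with one bad vertex does exist, that hypothesis makes $\mathrm{HF}^+$ computable but does not make the manifold an $L$-space---$\Sigma(2,3,7)$ is a counterexample---so your derivation of the $L$-space property fails on its own terms.) Second, the surgery-formula mechanism with local invariants $V_{\bullet}$ applies to surgeries on a knot in $S^3$, and these manifolds are provably not of that kind: Lemma \ref{lem:notsurgery} shows that all but finitely many $\Sigma(n,p,q)$ in each family are \emph{not} surgery on a knot, and that non-realisability is itself deduced from the unboundedness you are trying to prove, so your argument is circular at this point. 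Worse, a knot-surgery presentation with positive slope together with the Ni--Wu bound \cite{niwu} would cap the $d$-invariants from above by lens-space correction terms, which is incompatible with \eqref{eq:maina}. While $\Sigma(n,p,q)$ is surgery on a fixed $4$-component link (cf.\ the proof of Lemma \ref{lem:hyperbolicity}), there is no $V_{\bullet}$-type formula for link surgeries to invoke, and the ``main obstacle'' you flag---proving divergence of the local terms---is precisely the step for which your framework provides no tool.

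The paper's route is indirect and bypasses any surgery or lattice computation. The $L$-space condition comes from thinness: $K(n,0,q_0)$ is quasi-alternating for $|q_0|<n+1$ by \cite{champanerkar2012note} (Lemma \ref{lem:QA}), Khovanov homology is constant along $\td\F(n,0,q_0)$ (Theorem \ref{thm:propkan}), and the spectral sequence of \cite{bdcs} then gives the $L$-space property (Remark \ref{rem:OS}). The $d$-invariants are computed via Rustamov's formula $d=2\tau(\cdot,\mf t,1)-\lambda$ \cite{rustamov2004surgery}, where the Casson--Walker term $\lambda$ is constant along the family by Mullins' formula \cite{mullins1993generalized} (the knots are ribbon, hence have signature $0$, and share the Jones polynomial); the Turaev torsion is then computed by Fox calculus from the white-graph presentation of $\pi_1(\Sigma(n,p,q))$ and is linear in $p$ with nonzero slope (Lemma \ref{lem:torsion}). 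Note finally that the paper does not need your mirror trick: since the rational coefficients of the torsion sum to zero, divergence to $+\infty$ of one coefficient forces divergence to $-\infty$ of another, so both unbounded directions occur on the \emph{same} manifolds $\Sigma(n,p,q)$ in two different $\SpinC$ structures---a strictly stronger conclusion than the one your mirrored family would yield.
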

A more precise statement of Theorem \ref{thm:main} can be found in Section \ref{sec:finalstep}.
The $L$-spaces $\Sigma_m$ arise as branched double covers of families of
generalised Kanenobu knots. Incidentally, knots in the same family have
the same homological invariants (Khovanov and odd-Khovanov homologies with
$\Z$-coefficients, knot Floer homology with $\quotient{\Z}{2}$-coefficients)
and are thin: the reduced Khovanov and odd-Khovanov homologies with
$\Z$-coefficients and the knot Floer homology with
$\quotient{\Z}{2}$-coefficients are free modules and are supported in a single $\delta$-grading.
Thus, as a first application of Theorem \ref{thm:main}, we prove the following
theorem.
\begin{theorem}
\label{thm:main2}
For every odd number $\Delta \geq 5$, there exist infinite families of non-quasi-alternating thin knots
with determinant $\Delta^2$ and the same homological invariants.
\end{theorem}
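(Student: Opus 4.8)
The plan is to deduce Theorem \ref{thm:main2} from Theorem \ref{thm:main} together with the quasi-alternating obstruction of Ozsv\'ath--Szab\'o and Greene--Watson.

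Given an odd $\Delta \ge 5$, I would write $\Delta = 2n+1$ with $n \ge 2$, so that $(2n+1)^2 = \Delta^2$, and feed this $n$ into Theorem \ref{thm:main}. This produces a collection of $L$-spaces $\{\Sigma_m\}_{m \in \Z}$ with $|\H_1(\Sigma_m;\Z)| = \Delta^2$ whose $d$-invariants are unbounded above and below. Since each $\Sigma_m$ is by construction the branched double cover $\Sigma(K_m)$ of a generalised Kanenobu knot $K_m$, and since $|\H_1(\Sigma(K))| = \det(K)$, every $K_m$ has determinant $\Delta^2$. As recorded in the introduction these knots are thin and share all of their homological invariants; if needed I would pass to an infinite subfamily on which these invariants are literally constant.

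The heart of the argument is the following obstruction. If $K$ is quasi-alternating, then $\Sigma(K)$ bounds the negative-definite four-manifold determined by a checkerboard surface, whose intersection form is the Goeritz lattice of determinant $\pm\det(K)$. For any negative-definite filling $X$ of a rational homology sphere $Y$ one has $c_1(\mathfrak{t})^2 + b_2(X) \le 4\,d(Y,\mathfrak{s})$ whenever $\mathfrak{t}|_Y = \mathfrak{s}$; maximising the left-hand side over $\mathfrak{t}$ in each $\mathrm{spin}^{c}$ class and using that the Goeritz lattice has determinant $\det(K)$ yields a lower bound on $d(\Sigma(K),\mathfrak{s})$ that depends only on $\det(K)$ and not on the rank of the lattice (the diagonal $\langle -1\rangle$ summands contribute nothing to $c_1^2 + b_2$). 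Applying this to the mirror $\overline{K}$, which is again quasi-alternating, together with $d(-Y,\mathfrak{s}) = -d(Y,\mathfrak{s})$, gives the matching upper bound, so all $d$-invariants of $\Sigma(K)$ lie in one interval determined solely by $\det(K)$.

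Since $\det(K_m) = \Delta^2$ is constant along the family, a quasi-alternating $K_m$ would force the $d$-invariants of $\Sigma_m$ into this fixed interval; but Theorem \ref{thm:main} makes them unbounded, so only finitely many $K_m$ can be quasi-alternating. The remaining infinitely many are non-quasi-alternating thin knots of determinant $\Delta^2$ sharing the same homological invariants, as required. The main obstacle is precisely the determinant-only bound on the correction terms of double branched covers of quasi-alternating knots: the lattice estimate that removes the dependence on the rank is the delicate point, and I would invoke it directly from \cite{greenewatson} (building on \cite{ozszabsolutely}) rather than re-derive it.
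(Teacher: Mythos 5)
Your proposal is correct and follows essentially the same route as the paper: take the families produced by Theorem \ref{thm:main} with $n=\frac{\Delta-1}{2}$ and observe that the determinant-only bound on the $d$-invariants of branched double covers of quasi-alternating links (\cite[Proposition 3]{greenewatson}, whose proof via the Goeritz lattice and the Ozsv\'ath--Szab\'o negative-definite inequality you correctly sketch) forces all but finitely many knots in each family to be non-quasi-alternating. The only difference is that the paper simply cites that proposition where you unwind its proof, and your fallback of passing to a subfamily with constant invariants is unnecessary since the families $\td\F(n,0,q_0)$ already have this property by Remark \ref{rem:tdF}.
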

Theorem \ref{thm:main2} generalises \cite[Theorem 2]{greenewatson},
which proved the existence of such a family when $\Delta=5$.
The technique we use to prove Theorems \ref{thm:main} and \ref{thm:main2},
introduced by Greene and Watson in \cite{greenewatson},
relies on a relation (obtained by combining a result of Mullins in
\cite{mullins1993generalized} and a result of Rustamov in \cite{rustamov2004surgery})
between the $d$-invariant of a particular $\SpinC$ structure and the Turaev
torsion of the same $\SpinC$ structure (cf.~Proposition \ref{prop:dinv}).
Thanks to this relation (that holds when the branched double cover is an
$L$-space), the computation of the Turaev torsion is sufficient to
determine the $d$-invariants and prove the theorems.
Another computational method to prove non-quasi-alternating-ness has
recently been provided by Qazaqzeh and Chbili, and refined by Teragaito
(cf.~\cite{qazaqzeh2014new, teragaito2014quasi}).

Another application of Theorem \ref{thm:main}
concerns weight 1 manifolds that are not surgeries on a knot in $S^3$.
A manifold is \emph{weight 1} if its fundamental group is the normal
closure of a single element. Every manifold which is surgery on a
knot is weight 1. A natural question is whether the converse holds
(cf.~\cite[Question 9.23]{aschenbrenner20123}). A negative answer
was given in \cite{boyer1990surgery} by Boyer and Lines, who exhibited
an infinite set of small Seifert fibred spaces that are weight 1 but
that are not surgery on a knot in $S^3$. In \cite{doig2012finite} Doig used
the $d$-invariants as an obstruction for a manifold to being a surgery
on a knot, and gave more examples of small Seifert fibred spaces which
are weight 1 but are not surgery on a knot. After Doig, Hoffman and Walsh
proved that the family of manifolds $M_n$ from \cite{greenewatson} are
hyperbolic, weight 1 and are not surgery on a knot (cf.~\cite[Theorem 4.4]{hoffman2013big}).
As a further application of Theorem \ref{thm:main} we can generalise
their result by proving the following theorem.

\begin{thm}
\label{thm:main3}
For every odd integer $\Delta\gg0$, there exist infinitely many hyperbolic,
weight 1 manifolds $M_{\Delta,p}$ with $|\H_1(M_{\Delta,p})|=\Delta^2$ that
are not surgery on a knot in $S^3$.
\end{thm}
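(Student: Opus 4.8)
The plan is to feed the $L$-spaces of Theorem \ref{thm:main} into the strategy of Hoffman and Walsh \cite{hoffman2013big}. Fix $n\ge 2$ and put $\Delta=2n+1$, so that the family $\{\Sigma_m\}_{m\in\Z}$ furnished by Theorem \ref{thm:main} satisfies $|\H_1(\Sigma_m;\Z)|=\Delta^2$; after reindexing I would write these manifolds as $M_{\Delta,p}$. Three properties have to be established for infinitely many of them: that they are weight $1$, that they are not surgery on a knot in $S^3$, and that they are hyperbolic. The first holds for \emph{every} member and the second for all but finitely many, so the real work is the hyperbolicity along the family.

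For the weight $1$ property I would isolate the general fact that the double cover of $S^3$ branched over a knot is weight $1$. Each $M_{\Delta,p}$ is such a branched cover of a generalised Kanenobu knot $K$; since the base $S^3$ is simply connected and the branch locus $\td K\subset M_{\Delta,p}$ is connected, $\pi_1(M_{\Delta,p})$ is generated by meridians of $\td K$, which are mutually conjugate. Thus $\pi_1(M_{\Delta,p})$ is the normal closure of a single meridian, i.e.\ it is weight $1$.

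To see that infinitely many $M_{\Delta,p}$ are not surgery on a knot I would apply Doig's $d$-invariant obstruction \cite{doig2012finite}. By the surgery formula of Ozsv\'ath and Szab\'o, the $d$-invariants of a surgery on a knot are those of a lens space corrected by the non-negative terms $2V_i$; hence a \emph{positive} surgery with $|\H_1|=\Delta^2$ has its $d$-invariants bounded above, and a \emph{negative} one has them bounded below, by a constant depending only on $\Delta$. A member whose maximal $d$-invariant lies above this upper bound and whose minimal $d$-invariant lies below the lower one can therefore be neither a positive nor a negative surgery, hence no surgery at all. Since Theorem \ref{thm:main} makes the $d$-invariants unbounded both from above and from below, such members exist; the point needing care is that the two thresholds be exceeded by the \emph{same} infinitely many indices, which I expect to extract from the explicit Turaev torsion computation of Section \ref{sec:finalstep}.

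The main obstacle is hyperbolicity. Following \cite{hoffman2013big}, I would realise the family $\{M_{\Delta,p}\}_{p}$ as the Dehn fillings of a single cusped manifold $N_\Delta$, namely the double cover of $S^3$ branched over the generalised Kanenobu knot with the twisting circle responsible for the parameter $p$ drilled out, so that $p$ records the filling slope on the resulting cusp. Granting that $N_\Delta$ is hyperbolic, Thurston's hyperbolic Dehn surgery theorem shows that all but finitely many of its fillings are hyperbolic, giving infinitely many hyperbolic $M_{\Delta,p}$; intersecting this cofinite set of indices with the infinite set of non-surgery indices from the previous paragraph then proves the theorem. The hard part is to verify that $N_\Delta$ is hyperbolic: I would exhibit an explicit ideal triangulation, or a decomposition into hyperbolic pieces, and check it directly, an analysis that I expect to succeed only once $\Delta$ is sufficiently large — which is exactly the origin of the hypothesis $\Delta\gg0$.
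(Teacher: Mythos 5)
Your weight-1 step rests on a false general claim, and this is the central gap. A meridian of the lifted branch locus $\td K$ bounds a meridian disc inside the branched double cover (the disc meeting $\td K$ transversely in one point), so it is \emph{nullhomotopic} in $\Sigma(K)$: the meridians of $\td K$ normally generate the trivial subgroup of $\pi_1(\Sigma(K))$, not the whole group. The weight-1 property of knot groups lives downstairs in $S^3 \sm K$ and does not lift to the cover. Indeed, double branched covers of knots are \emph{not} weight 1 in general: a group that is the normal closure of a single element has cyclic abelianisation, whereas for instance $\H_1(\Sigma(8_{18})) \cong \quotient{\Z}{3} \oplus \quotient{\Z}{15}$ is not cyclic, so your argument would prove too much. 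The paper's Lemma \ref{lem:weight1} instead establishes weight 1 by a direct computation with the presentation \eqref{eq:presepi1}: adding the relation $e_4=1$ turns $b_2$ and $b_4$ into $e_1 = e_2^{q+n+1}$ and $e_3 = e_2^{q}$, whence $b_1$ and $b_3$ give $e_2^{2q+(n+1)p+(2n+1)} = e_2^{2q+(n+1)p} = 1$; triviality of the quotient therefore requires exactly the arithmetic hypothesis $\gcd(2q+(n+1)p,\,2n+1)=1$ that is built into the definition of $\td\F$. Some substitute for this computation (or at least for the gcd hypothesis, which also forces $\H_1$ cyclic) is indispensable.

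Your hyperbolicity plan has a second gap that, as formulated, cannot be closed: you propose one cusped manifold $N_\Delta$ for each $\Delta$ (drilling only the $p$-twist circle) and then to ``check directly'' that $N_\Delta$ is hyperbolic --- but that is an infinite list of verifications with no uniform argument, so the hypothesis of Thurston's theorem is left unproven for all but finitely many $\Delta$. The paper's Lemma \ref{lem:hyperbolicity} avoids precisely this by drilling \emph{all four} twist regions (the $p$-, the $q$-, and both $n$-twist circles), obtaining a single \emph{fixed} $4$-cusped manifold $\Sigma(T)$ (the complement of the link L10n101), whose complete hyperbolic structure is verified once and for all (ten ideal tetrahedra, every edge of degree $6$, shape parameters $e^{\pi i/3}$, eight cusp equations checked); Thurston's hyperbolic Dehn surgery theorem then covers all $n, |p|, |q| \gg 0$ simultaneously. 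This also corrects your reading of the hypothesis $\Delta \gg 0$: it arises because $n=(\Delta-1)/2$ is itself a filling parameter that must be large, not because some $N_\Delta$ fails to be hyperbolic for small $\Delta$. Your non-surgery step, by contrast, is sound and matches the paper: it is the Ni--Wu inequality $d(S^3_{\quotient{r}{s}}(K), i) \leq d(L(r,s), i)$ of \cite{niwu} with only finitely many lens spaces $L(\Delta^2,s)$, mirroring for negative slopes; and the ``same indices'' worry you raise is already settled by the precise form of Theorem \ref{thm:main} in Section \ref{sec:finalstep}, which produces $\mf s_p$ and $\mf s_p'$ on the \emph{same} $\Sigma(n,p,q)$, ultimately because the rational coefficients of the Turaev torsion sum to zero.
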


The manifolds $M_n$ studied by Hoffman and Walsh in
\cite[Theorem 4.4]{hoffman2013big} satisfied $|\H_1(M_n)|=25$.

As a final remark, we inform of the recent paper by Hom, Karakurt and Lidman
(cf.~\cite{hom2014surgery}), where they give further examples of Seifert fibred
spaces which are weigth 1 and are not surgery on a knot by using a new obstruction,
coming again from the $d$-invariants.

\begin{organisation}
Section \ref{sec:recap} of this paper is devoted to giving the definition
and the main properties of the generalised Kanenobu knots, and to proving
the relation between the $d$-invariants and the Turaev torsion, expressed
in Proposition \ref{prop:dinv}. In Section \ref{sec:presentations} we give
a presentation of the fundamental group and of the first homology group of
the branched double cover of some generalised Kanenobu knots. These
presentations are then used in Section \ref{sec:turaevtorsion} to compute
the Turaev torsion and to prove that its coefficients are unbounded for
some families of knots. In Section \ref{sec:finalstep} we deduce the
unboundedness of the $d$-invariants from the unboundedness of the coefficients
of the Turaev torsion, and we prove Theorems \ref{thm:main} and \ref{thm:main2}.
Lastly, in Section \ref{sec:surgery} we prove Theorem \ref{thm:main3}.
\end{organisation}

\begin{acknowledgements}
This paper originated from my Master's thesis at Pisa University (cf.~\cite{thesis}),
under the supervision of Prof.~Paolo Lisca. I would like to thank him for suggesting
the topic and for guiding me during the work. I am also very indebted to Liam
Watson, who suggested how to generalise the results of my thesis, answered many
questions, and gave me several suggestions concerning the writing of this paper.
I thank F\"edor Gainullin, Thomas Hockenhull and Niel Hoffman for helpful discussions and
their precious comments on early versions of this paper. Finally, I thank Dorothy Buck,
Antonio De Capua, Margaret Doig and Andr\'as Juh\'asz for helpful discussions and
suggestions.
\end{acknowledgements}

\section{The generalised Kanenobu knots}
\label{sec:recap}
\begin{defi}
Let $\beta$ be a braid with $3$ strands, and let $\beta^{-1}$ represent the inverse
of $\beta$ in $B_3$. Suppose that for every $p$ and $q$ in $\Z$ the link $K_\beta(p,q)$
in Figure \ref{fig:genkan} has one component. Then we define it to be the
\emph{generalised Kanenobu knot} $K_\beta(p,q)$.

Note that the generalised Kanenobu knots are a particular case of Watson's knots
(cf.~\cite[Section 3]{watson2006knots}).
\end{defi}

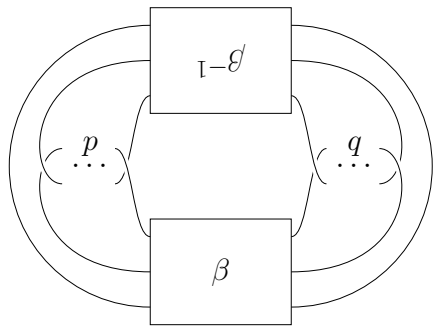
\begin{figure}[ht]
\begin{center}
\resizebox{0.4\textwidth}{!}{\begin{tikzpicture}[cross line/.style={preaction={draw=white, -, line width=6pt}}]

\def\v{0.5cm};
\def\h{4*\v};
\def\sh{6*\v};
\def\c{3*\v};

\draw (-\h,\sh+2*\v) arc (90:270:\sh+2*\v);
\draw (\h,-\sh-2*\v) arc (-90:90:\sh+2*\v);

\draw (-\h, -\sh) .. controls (-\h-2.5*\c,-\sh) and (-\sh-3*\v-2*\v,\v) .. (-\sh-3*\v,\v);
\draw[cross line] (-\h,\sh) .. controls (-\h-2.5*\c, \sh) and (-\sh-3*\v-2*\v,-\v) .. (-\sh-3*\v,-\v);
\draw (-\h,\sh-2*\v) .. controls (-\h-\v,\sh-2*\v) and (-\h-\v,-\v) .. (-\h-2*\v,-\v);
\draw[cross line] (-\h,-\sh+2*\v) .. controls (-\h-\v,-\sh+2*\v) and (-\h-\v,\v) .. (-\h-2*\v,\v);

\draw (-\h-3.3*\v,0) node {\Huge $\cdots$};
\draw (-\h-3.4*\v,\v/3) node[anchor=south] {\Huge $p$};

\draw (\h,\sh) .. controls (\h+2.5*\c, \sh) and (\sh+3*\v+2*\v,-\v) .. (\sh+3*\v,-\v);
\draw[cross line] (\h, -\sh) .. controls (\h+2.5*\c,-\sh) and (\sh+3*\v+2*\v,\v) .. (\sh+3*\v,\v);
\draw (\h,-\sh+2*\v) .. controls (\h+\v,-\sh+2*\v) and (\h+\v,\v) .. (\h+2*\v,\v);
\draw[cross line] (\h,\sh-2*\v) .. controls (\h+\v,\sh-2*\v) and (\h+\v,-\v) .. (\h+2*\v,-\v);

\draw (\h+3.7*\v,0) node {\Huge $\cdots$};
\draw (\h+3.6*\v,\v/3) node[anchor=south] {\Huge $q$};

\begin{scope}[shift={(0,\sh)}]
\draw[thick] (-\h,-3*\v) rectangle (\h,3*\v);
\draw (0, 0) node[rotate=180]{\Huge ${\beta^{-1}}$};
\end{scope}

\begin{scope}[shift={(0,-\sh)}, rotate=180]
\draw[thick] (-\h,-3*\v) rectangle (\h,3*\v);
\draw (0, 0) node{\Huge ${\beta}$};
\end{scope}

\end{tikzpicture}}
\caption{The generalised Kanenobu knot $K_\beta(p,q)$.
$p$ and $q$ represent the number of (positive) half twists.}
\end{center}
\label{fig:genkan}
\end{figure}

The next theorem is a generalisation of \cite[Theorem 6.12]{hedden2014geography}
to the case of the generalised Kanenobu knots. It summarises the properties
that we will use.

\begin{thm}
\label{thm:propkan}
Let $\beta$ be a $3$-braid such that $K_\beta(p,q)$ is always a knot.
Then, for every $p,q \in \Z$ we have
\begin{itemize}
\item[\emph{(i)}]{$\det \left(K_\beta(p,q)\right) = \left(\det B_\beta\right)^2$,
where $B_\beta$ is the knot in Figure \ref{fig:Bbeta};}
\item[\emph{(ii)}]{$\Kh(K_\beta(p,q)) \cong \Kh(K_\beta(p+1,q-1))$;}
\item[\emph{(iii)}]{$\Kh^{\odd}(K_\beta(p,q)) \cong \Kh^{\odd}(K_\beta(p+1,q-1))$;}
\item[\emph{(iv)}]{$\HFK(K_\beta(p,q)) \cong \HFK(K_\beta(p+2,q)) \cong \HFK(K_\beta(p,q+2))$.}
\end{itemize}
\end{thm}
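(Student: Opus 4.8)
The plan is to view the two twist regions as $2$-strand tangles $T_p$ and $T_q$ inserted into a fixed outer diagram $D$ assembled from the boxes $\beta$, $\beta^{-1}$ and the closure arcs of Figure~\ref{fig:genkan}; the outer diagram stays inert throughout, so every statement reduces to a local analysis near the two regions. For (i) I would run the skein relation for the Conway/Alexander polynomial at a single crossing of the $p$-region. Because the two strands threading each region are anti-parallel, the oriented resolution short-circuits the region and, evaluated at $t=-1$, contributes nothing; hence $\det K_\beta(p,q)$ is independent of $p$, and by the same argument of $q$. It therefore suffices to compute the determinant at the base case $p=q=0$, where the twist regions become trivial and the diagram manifestly falls apart into two copies of the building block $B_\beta$ of Figure~\ref{fig:Bbeta} (one contributed by $\beta$, the mirror one by $\beta^{-1}$); multiplicativity of the determinant under the resulting connected-sum/splitting then gives $\det K_\beta(0,0)=(\det B_\beta)^2$. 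An alternative route, made precise in Sections~\ref{sec:presentations}--\ref{sec:turaevtorsion}, is to read the determinant off the branched double cover: its first homology is presented by a linking matrix in block form whose determinant factors as $(\det B_\beta)^2$, again independently of $p$ and $q$.

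For (ii) and (iii) the operation $(p,q)\mapsto(p+1,q-1)$ preserves the total twisting $p+q$ but changes the knot type, so the content is that this redistribution of a single half-twist is invisible to Khovanov and to odd-Khovanov homology; this is genuinely stronger than an isotopy, consistent with the fact that only the weaker shift (iv) is recorded for knot Floer homology. I would prove it at the level of the formal Khovanov complex of the combined tangle $T_p\cup T_q$ (in Bar-Natan's cobordism category, and in its odd analogue). The point is that the $p$-region sits next to $\beta$ while the $q$-region sits next to $\beta^{-1}$, so the two regions carry opposite crossing signs; transferring one half-twist from one region to the other is then realised by a local cobordism that, together with a Reidemeister-II cancellation in the cobordism category, furnishes a homotopy equivalence between the complexes of $K_\beta(p,q)$ and $K_\beta(p+1,q-1)$. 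Passing to homology gives the two isomorphisms, the odd case being identical once one uses the odd-Khovanov cobordism relations in place of the even ones. A more hands-on substitute is to compare the unoriented (reduced) skein long exact sequences obtained by resolving a crossing in each of the two regions: both sequences feature the same twist-reduced knot and the same fixed clasp-resolved diagram, and matching them up forces the isomorphism.

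Finally, for (iv) the shift $(p,q)\mapsto(p+2,q)$ inserts a full twist, and I would compute $\HFK$ with $\Z/2$-coefficients from a doubly-pointed Heegaard (or grid) diagram in which the $p$-region contributes a standard chain of bigons. Over $\Z/2$ the knot Floer skein exact triangle for adding a full twist on two anti-parallel strands collapses, and one checks directly that the generators and differentials depend only on the parity of $p$, and symmetrically of $q$; this yields the two isomorphisms. I expect the main obstacle to be (ii)--(iii): establishing that the twist-transfer induces a homotopy equivalence of (odd-)Khovanov tangle complexes over $\Z$---keeping careful track of the homological and quantum gradings and of the two sign-opposite regions---is the technical heart, precisely because the analogous statement fails for knot Floer homology and so cannot come from a naive isotopy or from a general mutation-invariance principle.
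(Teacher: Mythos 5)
Your proposal attempts to re-derive from scratch what the paper mostly establishes by citation --- (ii) is Watson's \cite[Lemma 3.1]{watson2006knots}, (iii) is the odd adaptation from \cite[Theorem 9]{greenewatson}, (iv) is \cite[Theorem 1]{hedden2014geography} applied with the band at the twist regions, and (i) is proved via the reduced-white-graph presentation of $\H_1(\Sigma(K_\beta(p,q)))$ (Lemmas \ref{lem:determinants} and \ref{lem:gencyclic}) --- and each of your replacement arguments has a concrete gap. For (i): a crossing change in a twist region alters $p$ by $2$ (one Reidemeister II after the change), so your skein induction only identifies the determinant within each of the four parity classes of $(p,q)$, and your base case $K_\beta(0,0)=B_\beta\#\bar{B_\beta}$ covers only the even--even class; $K_\beta(1,0)$ is not a connected sum, so the other three classes are untouched. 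Moreover, at $t=-1$ the skein coefficient is $t^{1/2}-t^{-1/2}=\pm2i\neq0$, so ``contributes nothing'' requires $\det L_0=0$ for the oriented resolution $L_0$, i.e.\ that $L_0$ is (essentially) split --- not evident, since after short-circuiting the $p$-region the two halves of the diagram remain joined through the $q$-region and the outer closure arcs. The paper's route, which you only mention as an alternative, is the one that actually works: the Goeritz-type presentation matrix (the analogue of $M_{n,p,q}$) has determinant $\left(\det B_\beta\right)^2$ independently of $p$ and $q$, with no parity or splitness issues.

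For (ii)--(iii), your proposed mechanism would fail as stated: a local cobordism induces a chain map, not a homotopy equivalence, and opposite crossing signs in the two regions give no reason for such a map to be invertible. Indeed, transferring a half twist between two oppositely-signed twist regions of an arbitrary diagram does \emph{not} preserve $\Kh$; the isomorphism here depends on the specific $\beta$/$\beta^{-1}$ symmetry of the configuration, which is exactly the content of Watson's proof that you would need to reproduce. Your claim that the odd case is ``identical'' is also inaccurate: as the paper notes, the argument breaks at homological grading $0$, where one must additionally invoke the equality of Jones polynomials (already furnished by (ii)) following \cite[Theorem 9]{greenewatson}. For (iv), the assertion that ``generators and differentials depend only on the parity of $p$'' cannot be checked by any local analysis of the twist region: adding a full twist on two anti-parallel strands changes $\HFK$ in general (the twist knots already show this), so the claimed collapse of the unoriented skein triangle over $\quotient{\Z}{2}$ is precisely the non-trivial global statement of \cite[Theorem 1]{hedden2014geography} about adding full twists along a band in this ribbon configuration --- you have assumed the conclusion rather than proved it.
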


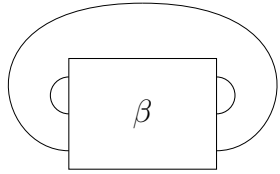
\begin{figure}[ht]
\begin{center}
\resizebox{0.35\textwidth}{!}{\begin{tikzpicture}[cross line/.style={preaction={draw=white, -, line width=12pt}}]

\def\v{-0.5cm};
\def\h{4*\v};
\def\sh{6*\v};
\def\p{4*\v};
\def\q{10*\v};

\draw[thick] (-\h,-3*\v) rectangle (\h,3*\v);
\draw[thick] (-\h,0) arc (90:270:\v);
\draw[thick] (\h,0) arc (90:-90:\v);
\draw (0, 0) node{\Huge ${\beta}$};

\draw (-\h,2*\v) .. controls (-\h-\p,2*\v) and (-\q,-\sh) .. (0,-\sh);
\draw (\h,2*\v) .. controls (\h+\p,2*\v) and (\q,-\sh) .. (0,-\sh);

\end{tikzpicture}}
\caption{The knot $B_\beta$.}
\label{fig:Bbeta}
\end{center}
\end{figure}

By $\Kh$ and $\Kh^{\odd}$ we respectively mean the (bigraded) Khovanov homology
and odd Khovanov homology with $\Z$-coefficients, and by $\HFK$ we mean both
$\widehat\HFK$ and $\HFK^-$ (always with the bigrading) with $\quotient{\Z}{2}$-coefficients.

\begin{proof}[{Proof of Theorem \ref{thm:propkan}}]
We shall postpone the proof of (i). It will be a straightforward
consequence of the presentation of the first homology group of the
branched double cover that we will derive in Section \ref{sec:presentations}
for the case of $K(n,p,q)$, and that can be generalised to $K_\beta(p,q)$
(cf.~Lemmas \ref{lem:determinants} and \ref{lem:gencyclic}).

\begin{itemize}
\item[{(ii)}]{It was proved by Watson (cf.~\cite[Lemma 3.1]{watson2006knots}).}

\item[{(iii)}]{The proof of (iii) is essentially the same as (ii), with a difference in
the case of the groups with homological grading $0$, where the equality of
the Jones polynomial of the two knots (given by (ii)) is used to finish the
proof (as explained in \cite[Theorem 9]{greenewatson}).}

\item[{(iv)}]{This is an application of \cite[Theorem 1]{hedden2014geography},
where the band is placed in correspondence of the $p$ half twists or the $q$ half twists.\qedhere}
\end{itemize}
\end{proof}

Consider now the families of knots
\[\F_\beta(p_0,q_0) = \left\{K_\beta(p_0+2n,q_0-2n)\,\middle|\,n\in\Z\right\}.\]
By Theorem \ref{thm:propkan} all the knots in one of these families have the
same homological invariants (Khovanov homology, odd-Khovanov homology and
knot Floer homology).

A key observation of Greene and Watson in \cite{greenewatson} says that the
$d$-invariants of the branched double covers of the knots in $\F_\beta(p_0,q_0)$
are related to the coefficients of their Turaev torsion. Specifically, we have
the following proposition.
\begin{prop}
\label{prop:dinv}
Let $\beta$ be a 3-braid such that $K_\beta(p,q)$ is always a knot, and let
$p_0$ and $q_0 \in \Z$.
Then there exists a constant $\lambda \in \R$ such that, for every knot
$K \in \F_\beta(p_0,q_0)$ such that the branched double cover $\Sigma(K)$
is an $L$-space, and for every Spin$^\C$ structure $\mf t$ on $\Sigma(K)$, we have
\[d\left(\Sigma(K), \mf t\right) = 2 \cdot \tau\left(\Sigma(K), \mf t, 1_{\H_1(\Sigma(K);\Z)}\right) - \lambda.\]
\end{prop}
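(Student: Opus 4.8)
The plan is to combine two known theorems from the literature — Mullins' formula relating the Casson–Walker invariant to the signature and the Jones polynomial, and Rustamov's expression of the Casson–Walker invariant of an $L$-space in terms of $d$-invariants and Turaev torsion — and then exploit the fact that all knots in a fixed family $\F_\beta(p_0,q_0)$ share the relevant knot invariants. Concretely, I would first recall Mullins' result: for the branched double cover $\Sigma(K)$ of a knot $K$, the Casson–Walker invariant $\lambda_{CW}(\Sigma(K))$ is expressed as a combination of the signature $\sigma(K)$ and the value $\frac{V_K'(-1)}{V_K(-1)}$ of (the logarithmic derivative of) the Jones polynomial at $-1$. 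Since every $K \in \F_\beta(p_0,q_0)$ has the same Khovanov homology by Theorem \ref{thm:propkan}(ii), hence the same Jones polynomial, and since the signature is determined by the Khovanov homology (it is a graded Euler characteristic invariant, constant along the family), the quantity $\lambda_{CW}(\Sigma(K))$ is \emph{constant} over the whole family. This constancy is what ultimately produces the family-dependent constant $\lambda$ in the statement.

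**Next I would** invoke Rustamov's theorem, which applies precisely because we assume $\Sigma(K)$ is an $L$-space. It gives a formula of the shape
\[
\lambda_{CW}\bigl(\Sigma(K)\bigr) = \sum_{\mf t} \left( c_1 \cdot d\bigl(\Sigma(K), \mf t\bigr) + c_2 \cdot \tau\bigl(\Sigma(K), \mf t, 1_{\H_1}\bigr) \right)
\]
summing over all $\SpinC$ structures, with universal constants $c_1, c_2$. Combining this with the constancy of the left-hand side along the family, one obtains a single linear relation tying the $d$-invariants to the Turaev torsion. The goal statement, however, is a \emph{pointwise} identity, holding for each individual $\mf t$ rather than merely after summation. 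To bridge this gap I would use the symmetry and additivity structure of both sides: the $d$-invariants and the torsion coefficients behave compatibly under the conjugation action on $\SpinC$ structures, and the self-conjugate (spin) structure plays a distinguished role. The cleanest route is to apply the combined Mullins–Rustamov relation not to the whole manifold at once but $\SpinC$-structure by $\SpinC$-structure, which is how the formula is actually packaged in the surgery-formula literature; this directly yields the affine relation $d = 2\tau - \lambda$ with $\lambda$ independent of the particular $K$ in the family.

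**The main obstacle** will be pinning down the precise constants and normalisations so that the coefficient of $\tau$ comes out to exactly $2$ and the additive constant $\lambda$ is genuinely independent of both $\mf t$ and the choice of $K$ within $\F_\beta(p_0,q_0)$. The subtlety is that Mullins and Rustamov use different normalisation conventions for the Casson–Walker invariant and for the Turaev torsion, and reconciling them requires care (this is exactly where sign and factor errors tend to creep in). I would isolate all the $K$-dependence into the term $\lambda_{CW}(\Sigma(K))$ and show it is absorbed entirely into $\lambda$; the remaining per-$\SpinC$-structure contributions must be checked to depend only on $\mf t$ through $\tau$. I expect that, once the conventions are fixed, verifying that the $d$-dependent and $\tau$-dependent pieces separate cleanly is routine, but establishing the correct identification of normalisations is the genuinely delicate point and is where I would spend most of the effort, following the treatment initiated by Greene and Watson in \cite{greenewatson}.
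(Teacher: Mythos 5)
Your overall strategy---combine Rustamov's theorem for $L$-spaces with Mullins' formula for the Casson--Walker invariant of a branched double cover, and isolate all the $K$-dependence into $\lambda(\Sigma(K))$---is exactly the paper's proof. However, two of the justifications you supply along the way are genuinely broken. First, your argument that $\sigma(K)$ is constant along $\F_\beta(p_0,q_0)$ is wrong: the signature is \emph{not} determined by Khovanov homology in general (the graded Euler characteristic of $\Kh$ is the Jones polynomial, not $\sigma$; only for thin knots does the $\delta$-grading pin down $\sigma$, and thinness is not among the hypotheses of the proposition---only that $\Sigma(K)$ is an $L$-space). The paper instead observes that every knot of the form $K_\beta(p,q)$ is ribbon, hence $\sigma(K)=0$ outright, which together with the constancy of the Jones polynomial (Theorem \ref{thm:propkan}.(ii)) makes Mullins' expression $\lambda(\Sigma(K)) = -\frac{V_K'(-1)}{6\,V_K(-1)} + \frac{\sigma(K)}{4}$ constant on the family. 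Without the ribbon observation (or some valid substitute) your constancy of $\lambda(\Sigma(K))$ does not follow.

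Second, you misquote Rustamov's theorem as a single identity summed over all $\SpinC$ structures and propose to recover the pointwise statement via conjugation symmetry. That bridge cannot work: one scalar relation, even combined with the symmetries of $d$-invariants and torsion coefficients under conjugation, underdetermines the $|\H_1(\Sigma(K);\Z)|$ individual values. Fortunately no bridge is needed: Rustamov's theorem, as recorded in \cite[Theorem 12]{greenewatson} and used by the paper, is already the pointwise identity $d(Y,\mf t) = 2\,\tau(Y,\mf t,1) - \lambda(Y)$ for every $\mf t$ on an $L$-space $Y$, with the coefficient $2$ and the normalisations fixed once and for all in that reference. Your closing remark that the formula ``is actually packaged $\SpinC$-structure by $\SpinC$-structure'' is the correct version, and once you quote it the proof is immediate; but as written your proposal hedges between a failing strategy (sum plus symmetry) and the correct one, and the signature step must be repaired as above before the argument is complete.
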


By $\tau\left(\Sigma(K), \mf t, 1_{\H_1(\Sigma(K);\Z)}\right)$ or $\tau(\Sigma(K), \mf t,1)$
we mean the rational coefficient of $1$ of the maximal abelian torsion
$\tau(\Sigma(K), \mf t)\in\Q[\H_1(\Sigma(K);\Z)]$ defined in \cite[Section I.3]{turaevtorsion}.
Notice that we are omitting the homological orientation $\omega$ of the $3$-manifold
$\Sigma(K)$ because every oriented $3$-manifold has a canonical homological
orientation induced by Poincar\'e duality (cf.~\cite[I.4.3]{turaevtorsion}).

\begin{remarkpro}
\label{rem:OS}
A condition that guarantees that all the branched double covers $\Sigma(K)$
(for $K \in \F_\beta(p_0,q_0)$) are $L$-spaces is that there is one knot
$K_\beta(p,q)$ with $p+q=p_0+q_0$ that is $\Kh$-thin (i.e.~the reduced Khovanov
homology is torsion-free and supported in a single $\delta$-grading).
In this case, by Theorem \ref{thm:propkan}.(ii), all knots $K$ in $\F_\beta(p_0,q_0)$
are $\Kh$-thin, and the spectral sequence from $\Kh(\bar{K})$ to $\HF(\Sigma(K))$
(cf.~\cite{bdcs}) implies that $\Sigma(K)$ is an $L$-space.
\end{remarkpro}

\begin{proof}[{Proof of Proposition \ref{prop:dinv}}]
Since $\Sigma(K)$ is an $L$-space, we can apply Rustamov's formula
(cf.~\cite[Theorem 3.4]{rustamov2004surgery} and \cite[Theorem 12]{greenewatson}) to obtain
\begin{equation}
\label{eq:rustamov}
d(\Sigma(K), \mf t)=2 \cdot \tau(\Sigma(K), \mf t, 1) - \lambda(\Sigma(K)).
\end{equation}
Here $\lambda(\Sigma(K))$ is the Casson-Walker invariant, computed by the following
formula (cf.~\cite[Theorem 5.1]{mullins1993generalized} and \cite[Theorem 13]{greenewatson}):
\begin{equation}
\label{eq:mullins}
\lambda(\Sigma(K)) = - \frac{V_K'(-1)}{6\cdot V_K(-1)} + \frac{\sigma(K)}{4},
\end{equation}
where $V_K$ denotes the Jones polynomial and $\sigma$ denotes the signature.

As all the knots of the form $K_\beta(p,q)$ are ribbon, $\sigma(K)=0$.
Moreover, the Jones polynomial is determined by the Khovanov homology, which is
the same for all knots in $\F_\beta(p_0,q_0)$ (cf.~Theorem \ref{thm:propkan}.(ii)).
Thus, Equation \eqref{eq:mullins} shows that $\lambda(\Sigma(K))$ is a constant
$\lambda$, so Equation \eqref{eq:rustamov} concludes the proof.
\end{proof}

The goal of this paper is to prove that for every integer $n \geq 2$ there are
infinite families of knots with determinant $(2n+1)^2$ and the same homological
invariants, such that the $d$-invariants of their branched double covers are not
bounded from above or below. To achieve it, we will compute the Turaev torsion
of the branched double covers for some families $\F_\beta(p_0,q_0)$ and we will
see that they are unbounded. Then we will apply
Proposition \ref{prop:dinv}, that implies that the $d$-invariants are
unbounded if and only if the coefficients of the Turaev torsion are as well.

To simplify our computations, we will focus on a particular family of
braids, namely the braids $\beta_n = \sigma_{1} \sigma_2^{-1} \sigma_1^{n}$,
with $n \geq 2$, represented in Figure \ref{fig:betan}.

\begin{figure}[ht]
\begin{center}
\resizebox{0.35\textwidth}{!}{\begin{tikzpicture}[cross line/.style={preaction={draw=white, -, line width=12pt}}]

\def\u{1cm};

\braid[rotate=90] s_1 s_2^{-1} s_1 s_1 s_1 s_1;

\draw[color=white,fill=white] (3.25*\u,0.8*\u) rectangle (5.25*\u,2.2*\u);
\draw (4.3*\u,1.5*\u) node {\huge\ldots};
\draw (4.25*\u,1.8*\u) node {\Large $n$};

\end{tikzpicture}}
\caption{The $3$-braid $\beta_n = \sigma_{1} \sigma_2^{-1} \sigma_1^{n}$, with $n \geq 2$.}
\label{fig:betan}
\end{center}
\end{figure}
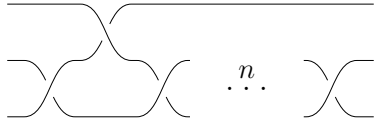

For the rest of the paper we will write $K(n,p,q)$ for
$K_{\beta_n}(p,q)$ and $\F(n,p_0,q_0)$ for $\F_{\beta_n}(p_0,q_0)$.
Also, $\Sigma(n,p,q)$ will denote the branched double cover $\Sigma(K_{\beta_n}(p,q))$.

\section{A presentation of $\pi_1(\Sigma(n,p,q))$ and $\H_1(\Sigma(n,p,q))$}
\label{sec:presentations}

In this section we find a presentation of the group $\pi_1(\Sigma(n,p,q))$ and
a presentation of the $\Z$-module $\H_1(\Sigma(n,p,q))$, for every $K(n,p,q)$.

\subsection{A presentation of $\pi_1(\Sigma(n,p,q))$}
\label{sec:presepi1}

The method we use to find a presentation of the fundamental group of
$\Sigma(n,p,q)$ relies on the algorithm explained in \cite[Section 3]{greene2008spanning},
that we briefly recall. Start from a planar diagram of a knot $K$ and
colour the complement of the projection of the knot in a chessboard
fashion, in such a way that the unbounded region is white.
Construct the white graph as follows: for every white region draw a
vertex, and for every crossing draw an edge between the two adjacent
white regions. If you now remove the vertex associated to the unbounded
region (but not the edges emanating from it), what is left is called
the \emph{reduced white graph} of the projection.
Label the vertices of the reduced white graph by $e_1, \ldots, e_w$, and
label each edge with the sign of the associated
crossing (according to the convention as in Figure \ref{fig:sign}).
Now fix a vertex $e_i$; for every edge emanating from $e_i$ to $e_j$
record a word $(e_j e_i^{-1})^\epsilon$, and for every edge emating
from $e_i$ to the unbounded region record the word $e_i^{-\epsilon}$,
where $\epsilon$ is the sign of the edge. Let $b_i$ be the word obtained by concatenating the words
associated to all edges emanating from $e_i$, recorded by counting
counterclockwise. Then, a presentation of $\pi_1(\Sigma(K))$ is
\[\pi_1(\Sigma(K)) = \langle e_1, \ldots, e_w \,|\, b_1, \ldots, b_w\rangle.\]

\begin{figure}[ht]
\begin{center}
\resizebox{0.33\textwidth}{!}{\begin{tikzpicture}

\def\u{1cm};
\def\p{\u/10};
\def\cdx{2*\u};
\def\csx{-\cdx};


\draw[fill=lgray, color=lgray] (\csx-\u, -\u) -- (\csx - \u, \u) -- (\csx + \u, -\u) -- (\csx +\u, \u) -- cycle;

\draw[thick] (\csx-\u, \u) -- (\csx-\p,\p);
\draw[thick] (\csx+\u, -\u) -- (\csx+\p,-\p);
\draw[thick] (\csx-\u,-\u) -- (\csx+\u,\u);

\draw (\csx, -\u-4*\p) node{$-1$};


\draw[fill=lgray, color=lgray] (\cdx-\u, -\u) -- (\cdx - \u, \u) -- (\cdx + \u, -\u) -- (\cdx +\u, \u) -- cycle;

\draw[thick] (\cdx-\u, -\u) -- (\cdx-\p,-\p);
\draw[thick] (\cdx+\u, \u) -- (\cdx+\p,\p);
\draw[thick] (\cdx-\u,\u) -- (\cdx+\u,-\u);

\draw (\cdx, -\u-4*\p) node{$+1$};

\end{tikzpicture}}
\caption{The sign associated to a crossing.}
\label{fig:sign}
\end{center}
\end{figure}
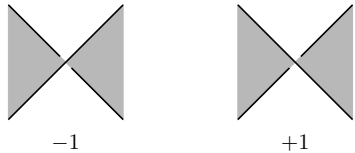

\begin{figure}[ht]
\begin{center}
\resizebox{\textwidth}{!}{\begin{tikzpicture}[crossline/.style={preaction={draw=white, -, line width=3pt}}]

\def\u{1.5cm};
\def\d{1.41*\u};
\def\p{2cm/5};
\def\rp{1.2*\u};
\def\rg{1.809*\u};
\def\esc{\d/2};
\def\pp{\p/1.8};

\def\ha{0.634*\u};
\def\gap{0.5cm};
\def\h2{0.601*\u};
\def\osh{1.63cm};
\def\dx{1.99*\osh};
\def\v{0.214cm};
\def\c{\ha/2-\h2/2};

\begin{scope}[shift={(0,-\ha/2+\h2/2)}]

\draw (0, \ha+2*\gap) arc (90:270:\ha/2+\h2/2+2*\gap);
\draw (\dx, \ha+2*\gap) arc (90:-90:\ha/2+\h2/2+2*\gap);

\draw (0,\ha) .. controls (-1.2*\v,\ha) and (-1.8*\v,\c-\gap/2) .. (-3*\v, \c -\gap/2);
\draw[crossline] (0,-\h2) .. controls (-1.2*\v,-\h2) and (-1.8*\v,\c+\gap/2) .. (-3*\v, \c+ \gap/2);
\draw (0,-\h2-\gap) .. controls (-8*\v,-\h2-\gap) and (-8*\v,\c+\gap/2) .. (-6*\v, \c+ \gap/2);
\draw[crossline] (0,\ha+\gap) .. controls (-8*\v,\ha+\gap) and (-8*\v,\c-\gap/2) .. (-6*\v, \c -\gap/2);
\begin{scope}[shift={(-4.5*\v,\c)}]
\draw (0,0) node{\ldots};
\draw (0,\pp) node{$p$};
\end{scope}

\begin{scope}[xshift=1.99*\osh,xscale=-1]
\draw (0,-\h2) .. controls (-1.2*\v,-\h2) and (-1.8*\v,\c+\gap/2) .. (-3*\v, \c+ \gap/2);
\draw[crossline] (0,\ha) .. controls (-1.2*\v,\ha) and (-1.8*\v,\c-\gap/2) .. (-3*\v, \c -\gap/2);
\draw (0,\ha+\gap) .. controls (-8*\v,\ha+\gap) and (-8*\v,\c-\gap/2) .. (-6*\v, \c -\gap/2);
\draw[crossline] (0,-\h2-\gap) .. controls (-8*\v,-\h2-\gap) and (-8*\v,\c+\gap/2) .. (-6*\v, \c+ \gap/2);
\begin{scope}[shift={(-4.5*\v,\c)}]
\draw (0,0) node{\ldots};
\draw (0,\pp) node{$q$};
\end{scope}
\end{scope}

\begin{scope}[shift={(0,-1.6*\u)},scale=0.5]
\braid[rotate=90] s_1 s_2^{-1} s_1 s_1 s_1 s_1;
\end{scope}
\begin{scope}[shift={(0,0.3*\u)},scale=0.5]
\braid[rotate=90] s_2^{-1} s_1 s_2^{-1} s_2^{-1} s_2^{-1} s_2^{-1};
\end{scope}

\draw[color=white,fill=white] (\osh,\ha+\gap-\pp) rectangle (\osh+2*\gap,\ha+2*\gap+\pp);
\draw[color=white,fill=white] (\osh,-\h2-\gap+\pp) rectangle (\osh+2*\gap,-\h2-2*\gap-\pp);

\begin{scope}[shift={(\osh+\gap,\ha+3/2*\gap)}]
\draw (0,0) node{\ldots};
\draw (0,\pp) node{$n$};
\end{scope}
\begin{scope}[shift={(\osh+\gap,-\h2-3/2*\gap)}]
\draw (0,0) node{\ldots};
\draw (0,\pp) node{$n$};
\end{scope}

\end{scope}


\begin{scope}[shift={(7*\u,0)}]
\draw[thick] (0,0) ellipse (\rg/1 and \rp);

\foreach \x in {1,2}
\draw[thick] (45+90*\x:\d) -- (45+90*\x:\d+\esc);

\draw[thick] (-\u,\u) -- (-\u,-\u);
\draw[thick] (\u, \u) -- (\u, -\u-\esc);
\draw[thick] (\u,-\u) -- (\u+\esc,-\u);

\draw[thick] (\u,\u) -- (\u+\esc/1.41,\u+\esc/1.41);
\draw[thick] (\u,\u) -- (\u-\esc/1.41,\u+\esc/1.41);

\foreach \x in {1,2}
\draw[fill=white, thick] (180+45-90*\x:\d) node{$e_{\x}$} circle (\p);
\foreach \x in {3,4}
\draw[fill=white, thick] (270+45+90*\x:\d) node{$e_{\x}$} circle (\p);

\draw[->] (-\u+3/2*\p,\u) arc (0:340:3/2*\p);
\draw[->] (\u-3/2*\p/2,-\u-3/2*\p*1.73/2) arc (-120:340-120:3/2*\p);
\draw[->] (-\u-3/2*\p*1.73/2,-\u-3/2*\p/2) arc (-180+30:340-180+30:3/2*\p);
\begin{scope}[rotate=180]
\draw[->] (-\u-3/2*\p*1.73/2,-\u-3/2*\p/2) arc (-180+30:340-180+30:3/2*\p);
\end{scope}

\draw (-\rg-\pp,0) node{$+$};
\draw (-\u+\pp,0) node{$+$};
\draw (-\u/2-\rg/2,0) node{\ldots};
\draw (-\u/2-\rg/2,\pp) node{$p$};
\draw (\rg+\pp,0) node{$+$};
\draw (\u-\pp,0) node{$+$};
\draw (\u/2+\rg/2,0) node{\ldots};
\draw (\u/2+\rg/2,\pp) node{$q$};
\draw (0, \rp +\pp) node{$+$};
\draw (0, -\rp -\pp) node{$-$};
\draw (45:\d +\esc+\pp) node{$+$};
\draw (45+90:\d +\esc+\pp) node{$+$};
\draw (45+180:\d +\esc+\pp) node{$-$};
\draw (\u-\esc/1.41-\pp/1.41,\u+\esc/1.41+\pp/1.41) node{$+$};
\draw (\u,-\u-\esc-\pp) node{$-$};
\draw (\u+\esc+\pp,-\u) node{$-$};

\draw (\u,\u+\esc/1.41) node{\ldots};
\draw (\u,\u+\esc/1.41) node[anchor=south]{$n$};
\draw (\u+\esc/2,-\u-\esc/2) node[rotate=45]{\ldots};
\draw (\u+\esc/2,-\u-\esc/2) node[anchor=north west]{$n$};

\end{scope}

\end{tikzpicture}}
\caption{A diagram of the knot $K(n,p,q)$ (on the left) and its associated reduced white graph (on the right).}
\label{fig:rwgknpq}
\end{center}
\end{figure}
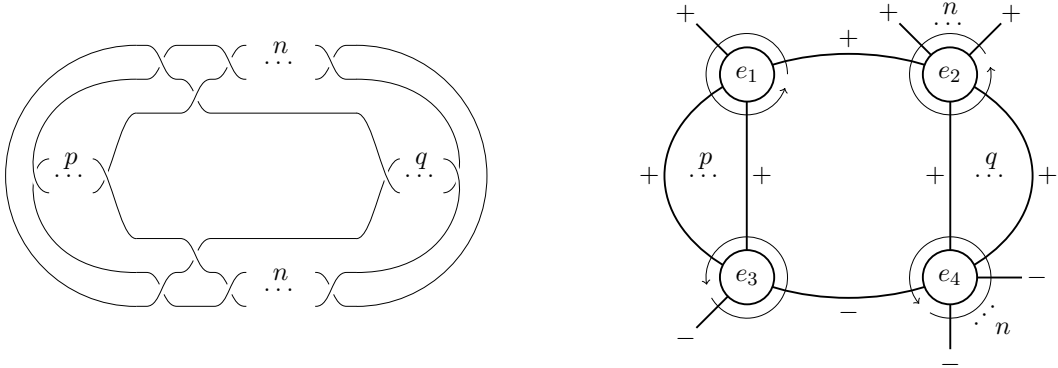

In the case of the knot $K(n,p,q)$, the reduced white graph looks
as in Figure \ref{fig:rwgknpq}.
Therefore, we have that
\begin{equation}
\pi_1(\Sigma(n,p,q)) = \langle e_1, e_2, e_3, e_4 \,|\, b_1,b_2,b_3,b_4 \rangle,
\label{eq:presepi1}
\end{equation}
where
\begin{IEEEeqnarray*}{l}
\label{eq:relpi}
b_1 = e_2 e_1^{-2} \left(e_3 e_1^{-1}\right)^p;  \IEEEyesnumber \IEEEyessubnumber \label{eq:relpi1} \\
b_2 = e_2^{-n} e_1 e_2^{-1} \left(e_4 e_2^{-1}\right)^q; \IEEEyessubnumber \label{eq:relpi2} \\
b_3 = e_3^2 e_4^{-1} \left(e_1 e_3^{-1}\right)^p; \IEEEyessubnumber \label{eq:relpi3} \\
b_4 = e_4^{n} \left(e_2 e_4^{-1}\right)^q e_4 e_3^{-1}. \IEEEyessubnumber \label{eq:relpi4}
\end{IEEEeqnarray*}

\subsection{A presentation of $\H_1(\Sigma(n,p,q))$}
\label{sec:preseH1}

In order to obtain a presentation of $\H_1(\Sigma(n,p,q))$ it is
sufficient to abelianise the presentation of Equation \eqref{eq:presepi1}.
Thus, a presentation matrix for $\H_1(\Sigma(n,p,q))$ is
\[
M_{n,p,q} =
\left(
\begin{matrix}
-p-2 & 1 & p & 0\\
1 & -q-n-1 & 0 & q \\
p & 0 & -p+2 & -1 \\
0 & q & -1 & -q+n+1 \\
\end{matrix}
\right).
\]

Now we can prove Theorem 2.(i) in the case of the knots $K(n,p,q)$:

\begin{lemma}
\label{lem:determinants}
For every $n \geq 2$, $p,q \in \Z$, we have
\begin{align*}
\det(K(n,p,q))&=(2n+1)^2;\\
\det(B_{\beta_n})&=2n+1.
\end{align*}
\end{lemma}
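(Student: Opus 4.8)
The plan is to compute the two determinants directly from the data already assembled. For the second equation, I would recall that $\det(B_{\beta_n})$ can be read off from a presentation matrix of $\H_1$ of the branched double cover of $B_{\beta_n}$, since for any knot $K$ one has $\det(K)=|\H_1(\Sigma(K);\Z)|$, which equals the absolute value of the determinant of any presentation matrix. The knot $B_{\beta_n}$ is simpler than $K(n,p,q)$ — it is the closure depicted in Figure \ref{fig:Bbeta} with only the single braid $\beta_n=\sigma_1\sigma_2^{-1}\sigma_1^n$ and no $p,q$ twist regions — so running the same reduced-white-graph algorithm from Section \ref{sec:presepi1} on a diagram of $B_{\beta_n}$ should yield a small presentation matrix whose determinant I can evaluate by hand and check equals $\pm(2n+1)$.

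For the first equation, the cleanest route is to compute $\det(K(n,p,q))=\bigl|\det M_{n,p,q}\bigr|$ directly from the presentation matrix $M_{n,p,q}$ displayed just above the lemma. First I would expand the $4\times 4$ determinant. A convenient organising principle is that $M_{n,p,q}$ has a block structure: writing it in $2\times 2$ blocks, the off-diagonal blocks $\begin{pmatrix} p & 0 \\ 0 & q \end{pmatrix}$ and $\begin{pmatrix} p & 0 \\ 0 & q\end{pmatrix}$ are diagonal, so cofactor expansion (or block-determinant manipulation) should collapse the $p$- and $q$-dependence. I expect the cross terms in $p$ and $q$ to cancel, leaving a value independent of $p$ and $q$, consistent with part (i) of Theorem \ref{thm:propkan} asserting the determinant depends only on $\beta$. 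The target value is $(2n+1)^2$, and I anticipate the answer will factor as a perfect square, matching the general fact $\det(K_\beta(p,q))=(\det B_\beta)^2$; indeed the consistency of the two formulas in the lemma is essentially this squaring relation, so verifying the first up to sign $(2n+1)^2$ simultaneously confirms the relationship $\det(K(n,p,q))=\det(B_{\beta_n})^2$.

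The main obstacle will be purely computational: carefully expanding the $4\times 4$ determinant of $M_{n,p,q}$ and tracking all the $p,q,n$ terms so that the $p$- and $q$-dependence genuinely cancels and a clean perfect square emerges. Sign bookkeeping is the secondary pitfall, since $\det(K)$ is the \emph{absolute value} of the presentation-matrix determinant, so I would compute $\det M_{n,p,q}$ as a signed integer and then take the absolute value at the end. To organise the algebra I would perform row and column operations on $M_{n,p,q}$ first — for instance adding or subtracting rows to zero out the $\pm 1$ entries — before expanding, which should reduce the bookkeeping considerably and make the cancellation of $p$ and $q$ transparent.
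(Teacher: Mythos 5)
Your proposal is correct and essentially identical to the paper's proof: the paper likewise evaluates $\det(K(n,p,q))=|\det M_{n,p,q}|=(2n+1)^2$ directly (the $p,q$-dependence cancelling under column moves, as made explicit in the reduction to the matrix \eqref{eq:presmatr} in Lemma \ref{lem:bdccyclic}), and obtains $\det(B_{\beta_n})$ by running the same reduced-white-graph presentation method on the diagram of Figure \ref{fig:Bbeta}. The only cosmetic difference is that the paper shortcuts your by-hand computation for $B_{\beta_n}$ by observing that the resulting presentation matrix is the bottom-right $2\times2$ minor of $M_{n,0,0}$, whose determinant is $2n+1$.
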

\begin{proof}
For the first equality, $\det(K(n,p,q))=|\det(M_{n,p,q})|=(2n+1)^2$.

In the same way as we found a presentation matrix for $\H_1(\Sigma(n,p,q))$,
we can derive one for $\H_1(\Sigma(B_{\beta_n}))$ by using the diagram in Figure \ref{fig:Bbeta}.
We then find that a presentation matrix for $\H_1(\Sigma(B_{\beta_n}))$ is
given by the bottom-right $2\times2$ minor of the matrix $M_{n,0,0}$.
The determinant of this minor is $2n+1$.
\end{proof}

The computation of the Turaev torsion will be much easier when
$\H_1(\Sigma(n,p,q))$ is cyclic. Therefore, we will now state
in the following lemma a condition that guarantees that this
group is cyclic.

\begin{lemma}
\label{lem:bdccyclic}
$\H_1(\Sigma(n,p,q))$ is cyclic if and only if $\gcd(2q+(n+1)p, 2n+1)=1$.
Moreover, if it is cyclic, both $[e_2]$ and $[e_4]$ are generators.
\end{lemma}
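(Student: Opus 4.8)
The plan is to simplify the $4\times 4$ presentation matrix $M_{n,p,q}$ by unimodular integer row and column operations, which do not change the isomorphism type of the cokernel $\H_1(\Sigma(n,p,q)) = \Z^4/\im(M_{n,p,q})$. The matrix has four entries equal to $\pm 1$, in positions $(1,2),(2,1),(3,4),(4,3)$, and the idea is to use two of these unit pivots to eliminate two of the generators. Reading the rows of $M_{n,p,q}$ as relations among $e_1,e_2,e_3,e_4$, the first row gives $e_2 = (p+2)e_1 - p e_3$ and the third row gives $e_4 = p e_1 + (2-p)e_3$; substituting these into the remaining two relations eliminates $e_2$ and $e_4$ and leaves a presentation on the two generators $e_1,e_3$.

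Writing $a = 2q+(n+1)p$ for brevity, I expect the resulting $2\times 2$ presentation matrix to be
\[
N = \begin{pmatrix} -(a+2n+1) & a \\ a & (2n+1) - a \end{pmatrix},
\]
which satisfies $\det N = -(2n+1)^2$, consistent with Lemma \ref{lem:determinants}. For a $2\times 2$ integer presentation matrix the cokernel is cyclic if and only if its first invariant factor is $1$, i.e.\ if and only if the gcd of all entries of $N$ equals $1$. A short computation gives $\gcd(a+2n+1,\, a,\, (2n+1)-a) = \gcd(a,\, 2n+1)$, which establishes the stated equivalence.

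For the final assertion, I would track the two substitutions above through the reduction: in the cyclic group $\Z^2/\im(N)$ on generators $e_1,e_3$, the classes of $e_2$ and $e_4$ are represented by the vectors $(p+2,-p)$ and $(p,2-p)$ respectively. A vector $v\in\Z^2$ generates $\Z^2/\im(\trasp{N})$ precisely when $\Z v + \im(\trasp{N}) = \Z^2$, i.e.\ when the $2\times 2$ minors of the $3\times 2$ matrix obtained by appending $v$ to the two rows of $N$ have gcd $1$. Computing these minors, I expect the relevant gcd to divide both $4q+p$ and $2n+1$ in each of the two cases (the off-diagonal minor being $\pm(4q+p)$ up to a multiple of $2(2n+1)$, while $\det N = -(2n+1)^2$).

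The crux, and the step I expect to be the main obstacle, is the concluding number-theoretic point: I must show that this last gcd is $1$ under the hypothesis $\gcd(a,2n+1)=1$. This should follow because any prime $\ell \mid 2n+1$ is odd, so $2$ is invertible modulo $\ell$; from $2n+1\equiv 0$ one gets $2(n+1)\equiv 1 \pmod\ell$, whence $2a = 4q + 2(n+1)p \equiv 4q+p \pmod\ell$. If in addition $\ell \mid 4q+p$ then $\ell \mid 2a$, and since $\ell$ is odd this forces $\ell \mid a$, contradicting the hypothesis. Hence no such prime exists, the gcd is $1$, and both $[e_2]$ and $[e_4]$ generate. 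The bookkeeping in the reduction must be carried out carefully to be sure each operation is unimodular, but no genuine difficulty arises there.
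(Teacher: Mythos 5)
Your proof is correct; I checked the reduction and every step lands where you predicted. Substituting $e_2=(p+2)e_1-pe_3$ and $e_4=pe_1+(2-p)e_3$ (from rows $1$ and $3$) into rows $2$ and $4$ gives exactly your matrix $N$, with $\det N=-(2n+1)^2$; the identity $\gcd(a+2n+1,\,a,\,(2n+1)-a)=\gcd(a,2n+1)$ holds; your minors come out as claimed (for $[e_4]$ they are $\det N=-(2n+1)^2$, $4q+p$ exactly, and $-(4q+p)-2(2n+1)$); and the concluding congruence $2a=4q+2(n+1)p\equiv 4q+p \pmod{2n+1}$ together with the oddness of $2n+1$ closes the argument. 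The paper follows the same overall strategy for the first assertion (use the unit entries of $M_{n,p,q}$ to reduce to a $2\times2$ presentation, then apply the gcd-of-entries criterion), but makes the \emph{dual} choice of pivots: it performs column moves eliminating $e_1$ and $e_3$, the generators that occur with coefficient $\pm1$ in the relations coming from $b_2$ and $b_4$, so that its reduced presentation
\[
\begin{pmatrix} -2q-(n+1)p-(2n+1) & -(2n+1) \\ 2q+(n+1)p & 2n+1 \end{pmatrix}
\]
is a presentation on the generators $[e_2]$ and $[e_4]$ themselves. With that choice the ``moreover'' clause is dispatched in one line, since the surviving generators are literally the classes in question, whereas your choice of keeping $e_1,e_3$ forces you to prove the generator statement by hand via the stacked $3\times2$ minor criterion and the prime argument. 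What each approach buys: the paper's is shorter, but its treatment of the generator claim is a terse assertion (note that a cyclic group generated jointly by two elements need not be generated by each one, so the specific relations are really being used there); your computation supplies a fully explicit and independently verifiable substitute, and your congruence $2a\equiv 4q+p\pmod{2n+1}$ is precisely the observation the paper records separately in the remark following the lemma, namely that $\gcd(2q+(n+1)p,2n+1)=1$ is equivalent to $\gcd(4q+p,2n+1)=1$. One cosmetic point: since your $N$ is symmetric, the transpose in $\im(\trasp{N})$ is harmless, and appending $v$ as a row to the two relation rows is the correct formulation of the criterion.
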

\begin{proof}
After some column moves, the matrix $M_{n,p,q}$ has the form
\begin{equation}
\left(
\begin{matrix}
0 & 1 & 0 & 0 \\
-2q-(n+1)p-(2n+1) & -q-(n+1) & -(2n+1) & q \\
0 & 0 & 0 & -1 \\
2q+(n+1)p & q & 2n+1 & -q + (n+1)
\end{matrix}
\right).
\label{eq:presmatr}
\end{equation}

Thus, another presentation matrix for $\H_1(\Sigma(n,p,q))$ is obtained
by taking the minor
\begin{equation}
\left(
\begin{matrix}
-2q-(n+1)p-(2n+1) & -(2n+1) \\
2q+(n+1)p & 2n+1
\end{matrix}
\right).
\label{eq:presecorta}
\end{equation}
By a standard argument of Commutative Algebra, it is clear from the above
presentation that $\H_1(\Sigma(n,p,q))$ is cyclic if and only if
$\gcd(2q+(n+1)p, 2n+1)=1$. In such a case, as the generators in the presentation
of Equation \eqref{eq:presecorta} are $[e_2]$ and $[e_4]$, each of them generates
the whole abelian group.
\end{proof}

\begin{remark}
The condition $\gcd(2q+(n+1)p, 2n+1)=1$ of Lemma \ref{lem:bdccyclic} is
equivalent to $\gcd(4q+p, 2n+1)=1$ (by multiplying $2q+(n+1)p$ by $2$).
\end{remark}

Recall that we are interested in studying the knots in the family $\F(n,p_0,q_0)$,
and we would like to rule out the knots such that their branched double covers
do not have cyclic homology. If we define the family
\begin{equation}
\td\F(n,p_0,q_0)= \left\{K(n,p,q) \in \F(n, p_0, q_0) \,\middle|\,\gcd(p+4q,2n+1)=1\right\},
\end{equation}
then by Lemma \ref{lem:bdccyclic} the first homology group of the branched double
cover of any knot in $\td\F(n,p_0,q_0)$ is cyclic.

What we have to check is that $\td\F(n,p_0,q_0)$ is still an infinite family.
Since $p+q=p_0+q_0$, the condition $\gcd(p+4q,2n+1)=1$ becomes
\begin{equation}
\label{eq:cyclic}
\gcd(3q+p_0+q_0,2n+1)=1.
\end{equation}

We now separate into two cases.

If $3 \!\!\not| \, 2n+1$, then there are $\phi(2n+1)$ solutions modulo $(2n+1)$ to
Equation \eqref{eq:cyclic}, so the family $\td\F(n,p_0,q_0)$ is still
infinite in both directions (i.e.~with $p \gg 0$ and $p\ll0$).

If instead $3\,|\,2n+1$, then either $p_0+q_0$ is divisible by $3$, in which
case Equation \eqref{eq:cyclic} is impossible, or $\gcd(p_0+q_0,3)=1$.
In the latter case, write $2n+1=3^a\cdot b$, with $3 \!\!\not| \, b$.
Then Equation \eqref{eq:cyclic} is equivalent to
\[\gcd\left(3q+p_0+q_0,b\right)=1.\]
The number of solutions modulo $(2n+1)$ in this case is
\[3^a \cdot \phi(b) \geq 3,\]
where the Euler function $\phi$ is redefined at $1$ in such a way that $\phi(1)=1$.

We can summarise our computation in the following remark.

\begin{remarkpro}
\label{rem:tdF}
If $3 {\not|} \, \gcd(2n+1,p_0+q_0)$, then the family $\td\F(n,p_0,q_0)$ consists
of infinitely many knots (with $p$ arbitrarily large and arbitrarily small). Moreover,
\begin{enumerate}[noitemsep,nolistsep]
\item[(i)]{for every $K \in \td\F(n,p_0,q_0)$, $\det K = (2n+1)^2$;}
\item[(ii)]{all knots in $\td\F(n,p_0,q_0)$ have isomorphic Khovanov,
odd-Khovanov and knot Floer homology;}
\item[(iii)]{for every $K \in \td\F(n,p_0,q_0)$, $\H_1(\Sigma(K)) \cong \quotient{\Z}{\left((2n+1)^2\right)}$,
generated by $[e_4]$.}
\end{enumerate}
\end{remarkpro}

Before concluding this section, for the sake of completeness
we remark that the same computations of this section for a general
braid $\beta$ yield the following generalisation of Lemmas \ref{lem:determinants}
and \ref{lem:bdccyclic}, that concludes the proof of Theorem \ref{thm:propkan}.(i).

\begin{lemma}
\label{lem:gencyclic}
For all $3$-braids $\beta$ such that $K_\beta(p,q)$ is always a knot, and for all
$p, q \in \Z$, we have $\det\left(K_\beta(p,q)\right)=\left(\det B_\beta\right)^2$.

Also, $\H_1(\Sigma(K_\beta(p,q))) \cong \quotient{\Z}{\left(\det B_\beta\right)^2}$
if and only if $\gcd(q \Delta_m + p \Delta_1, \det B_\beta)=1$,
where $\Delta_1$ and $\Delta_m$ are suitable minors of the presentation matrix
for $\H_1(\Sigma(B_\beta))$ arising from the reduced white graph of the diagram
in Figure \ref{fig:Bbeta}.
\end{lemma}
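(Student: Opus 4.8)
The plan is to run exactly the machinery that produced Lemmas \ref{lem:determinants} and \ref{lem:bdccyclic}, but keeping the braid $\beta$ abstract. First I would build the reduced white graph of the diagram of $K_\beta(p,q)$ from Figure \ref{fig:genkan}, following the algorithm recalled in Section \ref{sec:presepi1}. The diagram carries an evident symmetry: its bottom half is $\beta$ and its top half is $\beta^{-1}$, interchanged by a rotation, while the $p$ half-twists sit on the left and the $q$ half-twists on the right. Consequently the vertices of the reduced white graph split into two sets of equal size, a ``$\beta$-block'' and a ``$\beta^{-1}$-block'', and the resulting (symmetric, Goeritz-type) presentation matrix acquires a block form
\[
M_{\beta,p,q} = \begin{pmatrix} A & B \\ B^T & D \end{pmatrix},
\]
in which the integer $p$ enters only through the edges of the left twist region and $q$ only through the edges of the right twist region. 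The key structural observation, already visible in the explicit $M_{n,p,q}$, is that these twist edges contribute rank-one pieces that force a relation of the shape $A + D = -2B$; this is the abstract replacement for the explicit column moves used before.

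For statement (i) I would then compute $\det M_{\beta,p,q}$ purely formally from this block structure. The block operations $R_2 \mapsto R_2 + R_1$ and $C_2 \mapsto C_2 + C_1$, together with $A + D = -2B$, turn the matrix into $\begin{pmatrix} A & A+B \\ A+B & 0 \end{pmatrix}$, whose determinant equals $\pm\det(A+B)^2$. Since $A+B$ is, up to sign, exactly the presentation matrix for $\H_1(\Sigma(B_\beta))$ arising from Figure \ref{fig:Bbeta} — the braid $\beta$ closed off by a single band, which is what survives when the two blocks are superposed — this yields $\det(K_\beta(p,q)) = |\det M_{\beta,p,q}| = (\det B_\beta)^2$, manifestly independent of $p$ and $q$. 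This simultaneously finishes the postponed proof of Theorem \ref{thm:propkan}.(i).

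For the cyclicity statement I would mimic the reduction of Lemma \ref{lem:bdccyclic}: using the rank-one nature of the twist contributions to clear all but two relations, I would reduce $M_{\beta,p,q}$ to a $2\times 2$ matrix whose off-diagonal records $\det B_\beta$ and whose remaining entry is the weighted combination $q\Delta_m + p\Delta_1$. Here $\Delta_1$ and $\Delta_m$ are the minors of the presentation matrix of $\H_1(\Sigma(B_\beta))$ dual to the two vertices at which the $p$- and $q$-twists attach; for $\beta = \beta_n$ these specialise to $n+1$ and $2$, recovering $\gcd(2q + (n+1)p, 2n+1) = 1$. A standard Smith-normal-form argument then gives $\H_1(\Sigma(K_\beta(p,q))) \cong \Z/(\det B_\beta)^2$ if and only if $\gcd(q\Delta_m + p\Delta_1, \det B_\beta) = 1$.

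The main obstacle is that, unlike the $\beta_n$ case, a general $3$-braid has an unpredictable number of crossings, so the blocks $A,B,D$ have variable size and cannot be written down explicitly. The work therefore lies in justifying the two structural inputs abstractly: \emph{(a)} that the rotation symmetry of the diagram really does partition the reduced white graph into two equal halves and that the twist edges contribute only rank-one blocks, producing the relation $A + D = -2B$ so that the determinant collapses to $\pm\det(A+B)^2$; and \emph{(b)} that $A+B$ is genuinely the presentation matrix associated to $B_\beta$, with the minors in the cyclicity condition being exactly its $\Delta_1,\Delta_m$. Both reduce to a careful bookkeeping of how the reduced white graph changes when the two halves of the diagram are identified and when a single twist is inserted, which is the only genuinely diagram-dependent part of the argument.
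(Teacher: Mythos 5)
Your proposal takes essentially the same route as the paper, which gives no separate proof of this lemma beyond the remark that the white-graph computations of Section 2 generalise verbatim to arbitrary $\beta$; your block decomposition with $A+D=-2B$, the collapse of the determinant to $\pm\det(A+B)^2$, and the identification of the cofactors $\Delta_1,\Delta_m$ at the twist-attachment vertices are exactly what those computations produce, as one checks against the explicit $M_{n,p,q}$. One small correction to your justification of claim (a): the symmetry pairing the two halves of the reduced white graph is the reflection through the horizontal axis through the twist regions (which preserves the diagram's shadow and hence the checkerboard colouring, matches the box crossings including over/under data since the top box is $\sigma_{3-i_1}^{-\epsilon_1}\cdots\sigma_{3-i_k}^{-\epsilon_k}$ when the bottom is $\sigma_{i_1}^{\epsilon_1}\cdots\sigma_{i_k}^{\epsilon_k}$, and, being orientation-reversing, flips every Goeritz sign), not the $\pi$-rotation, which instead carries the diagram of $K_\beta(p,q)$ to a diagram of $K_{\beta^{-1}}$ rather than to itself.
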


\section{Computing the Turaev torsion}
\label{sec:turaevtorsion}

In this section we will prove the following lemma.

\begin{lemma}
\label{lem:torsion}
Suppose that $3 {\not|} \, \gcd(2n+1, p_0+q_0)$. Then for every
$K(n,p,q)$ in $\td\F(n,p_0,q_0)$ there exist Spin$^\C$ structures
$\mf s_p$ and $\mf s'_p$ on $\Sigma(n,p,q)$ such that for $p \to \pm\infty$
we have
\begin{IEEEeqnarray*}{l}
\tau(\Sigma(n,p,q), \mf s_p,1) \to \pm\infty; \\ 
\tau(\Sigma(n,p,q), \mf s_p',1) \to \mp\infty. 
\end{IEEEeqnarray*}
\end{lemma}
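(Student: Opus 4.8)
The plan is to compute the maximal abelian Turaev torsion of $\Sigma(n,p,q)$ directly from the cyclic presentation of $\H_1$ that we have already obtained, and then extract the coefficient of $1$ as an explicit function of $p$ (with $q=p_0+q_0-p$). Since we are in the family $\td\F(n,p_0,q_0)$, Remark \ref{rem:tdF} tells us that $\H_1(\Sigma(n,p,q))\cong\quotient{\Z}{(2n+1)^2}$, generated by $[e_4]$, so the group ring is $\Q[\Z/(2n+1)^2]$ and the torsion is a $\Q$-linear combination of the group elements $t^k$, where $t=[e_4]$. The first step is therefore to express all four generators $[e_1],[e_2],[e_3]$ in terms of $t$ via the abelianised relations, i.e.\ reduce the $4\times4$ presentation matrix $M_{n,p,q}$ to the cyclic form and read off the exponents.

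\textbf{The torsion from the chain complex.}
The torsion of a knot complement (or here, of the branched double cover) is computed as the Reidemeister torsion of the equivariant chain complex associated to the presentation \eqref{eq:presepi1}, tensored up over the maximal abelian cover. Concretely, I would form the Fox Jacobian of the relators $b_1,\dots,b_4$ with respect to the generators $e_1,\dots,e_4$, abelianise its entries into $\Q[\H_1]$, and then — because the torsion is defined from a based acyclic complex after inverting the relevant group elements — delete one row and one column and take the determinant of the resulting $3\times3$ minor, dividing by the appropriate $(t^{a}-1)$ factor coming from the deleted generator. This is exactly the recipe of \cite{turaevtorsion}; the Fox derivatives of words like $(e_3e_1^{-1})^p$ and $(e_4e_2^{-1})^q$ produce geometric-series factors $\frac{x^p-1}{x-1}$, so the entries, and hence the determinant, will be explicit rational functions of $p$ and $q$ once we substitute $e_i\mapsto t^{m_i}$.

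\textbf{Extracting the coefficient of $1$ and its asymptotics.}
Having obtained $\tau(\Sigma(n,p,q),\mf t)\in\Q[\Z/(2n+1)^2]$ as an explicit element, I would pass to the Fourier/character side: for each nontrivial character $\chi$ of $\Z/(2n+1)^2$ the value $\chi(\tau)$ is a product/ratio of cyclotomic-type factors evaluated at roots of unity, and the coefficient of $1$ is recovered by averaging $\frac{1}{(2n+1)^2}\sum_\chi \chi(\tau)$ over all characters. The point is that, as $p\to\pm\infty$ with $q=p_0+q_0-p$, the dependence on $p$ enters only through bounded periodic data \emph{except} for one linear-in-$p$ term contributed by a specific $\Spin^\C$ structure, which is why a good choice of $\mf s_p$ (respectively $\mf s_p'$) isolates a coefficient growing like $+cp$ (respectively $-cp$) for a positive constant $c$. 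I would define $\mf s_p,\mf s_p'$ precisely by the residue classes that pick out these growing terms.

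\textbf{Main obstacle.}
The genuinely delicate part is bookkeeping: correctly identifying the isomorphism $\H_1\cong\quotient{\Z}{(2n+1)^2}$ so that each $e_i$ is sent to the right power of $t$, fixing the Euler-structure/$\Spin^\C$ normalisation and the homological orientation so that the \emph{rational} coefficient of $1$ (not merely the torsion up to units $\pm t^k$) is well defined, and then checking that after all cancellations the coefficient of $1$ really does grow linearly in $p$ rather than staying bounded. In other words, the hard step is not evaluating any single determinant but controlling the indeterminacy of the torsion tightly enough that the linear growth survives and is correctly attached to the two families $\mf s_p$ and $\mf s_p'$ with opposite signs.
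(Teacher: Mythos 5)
Your computational core coincides with the paper's: the paper also obtains the torsion from the Fox Jacobian of the presentation \eqref{eq:presepi1} via Turaev's formula (Remark \ref{rem:torcalc}: delete the row and column corresponding to $e_4$ and $f_4$, take $\Delta^{4,4}$, and divide by $(h_4-1)(g_4-1)$), and the resulting determinant is indeed linear in $p$ after substituting $q=-p+p_0+q_0$. The genuine gap is in your extraction step. You propose to reconstruct the full maximal abelian torsion by Fourier inversion over \emph{all} characters of $\quotient{\Z}{(2n+1)^2}$ and then assert that ``the dependence on $p$ enters only through bounded periodic data except for one linear-in-$p$ term contributed by a specific $\Spin^\C$ structure.'' That assertion is precisely the content of the lemma, and you give no mechanism for it; worse, the averaging requires coherent sign and Euler-structure normalisations across all characters simultaneously (including full-order characters, where one must also verify $\vphi(\Delta^{4,4})\neq0$, and the trivial character, where Turaev's formula does not apply), which is exactly the indeterminacy you flag as the hard part without resolving it. As written, the argument would stall at this point.

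The paper shows this heavy machinery is unnecessary, via three soft observations. First, a \emph{single} character suffices: the homomorphism $\vphi$ sending $[e_4]$ to a primitive $(2n+1)$-th root of unity $\zeta$ (order $2n+1$, not $(2n+1)^2$) makes the bookkeeping trivial, since the relations force $\vphi([e_2])=\vphi([e_4])=\zeta$ and $\vphi([e_1])=\vphi([e_3])=\zeta^{n+1}$, and yields $\vphi(\Delta^{4,4})=-(\zeta^{n+1}+\zeta+1)p+C_1$ with $\zeta^{n+1}+\zeta+1\neq0$ for $n\geq2$; hence $|\tau^\vphi|\to\infty$. Second, since each rational coefficient of the twisted torsion is a sum of rational coefficients of the maximal abelian torsion, unboundedness of $\tau^\vphi$ already forces some coefficient $\tau(\Sigma(n,p,q),\mf t_p,h_p)$ to be unbounded, and the identity $\tau(Y,g\cdot\mf t,h)=\tau(Y,\mf t,g^{-1}h)$ converts this into an unbounded coefficient of $1$ for a $p$-dependent structure $\mf s_p$ --- a step your proposal omits entirely, though it is what makes ``there exist Spin$^\C$ structures'' cheap. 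Third, the opposite-sign family $\mf s_p'$ comes for free from the vanishing of the sum of all rational coefficients (the augmentation, \cite[I.1.2]{turaevtorsion}): if one coefficient tends to $+\infty$, another must tend to $-\infty$. In particular the global sign ambiguity $\pm$ in Remark \ref{rem:torcalc} never needs to be pinned down, because only unboundedness in absolute value and the zero-sum property are used; your concern about controlling the normalisation tightly enough dissolves once the argument is organised this way.
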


Before proving the lemma, let us recall the properties of the Turaev torsion
that we will need. We follow \cite[Section I.3]{turaevtorsion}, focusing on the
case of rational homology spheres (or - equivalently - closed connected oriented
3-manifolds with finite first homology group).
The maximal abelian (Turaev) torsion of a rational homology sphere $Y$
(with the standard homological orientation) and a Spin$^\C$ structure
$\mf t \in \SpinC(Y)$ is an element of $\Q[H]$, where $H=\H_1(Y;\Z)$.
We can write it in terms of its rational coefficients as
\[\tau(Y,\mf t) = \sum_{h \in H} {\tau(Y, \mf t, h) \cdot h}.\]
Recall that (\cite[Chapter I, Equation (3.c)]{turaevtorsion})
\[\tau(Y, g \cdot \mf t, h) = \tau(Y, \mf t, g^{-1} h),\]
where the action of $H$ on $\SpinC(Y)$ is the usual free and transitive
action of $\H_1(Y;\Z)$ on $\SpinC(Y)$. Thus, it is sufficient to know
$\tau(Y, \mf t, 1)$ for every $\mf t$ to determine the maximal abelian
torsion.

Moreover, recall that any surjective ring homomorphism $\varphi:\Q[H] \to \K$,
where $\K$ is a cyclotomic field, carries the maximal abelian torsion to the
$\varphi$-twisted torsion:
\[\vphi(\tau(Y,\mf t)) = \tau^\vphi(Y, \mf t).\]
Thus, the rational coefficients of the $\vphi$-twisted torsion are sums
of the rational coefficients of the maximal abelian torsion. Also,
the sum of all the rational coefficients $\tau(Y, \mf t, 1)$ vanishes
because it is the twisted torsion associated to the augmentation map
$\Q[H] \to \Q$ (cf.~\cite[I.1.2]{turaevtorsion}).

In \cite[Theorem II.1.2, case (4)]{turaevtorsion} Turaev describes a method
to compute the\linebreak $\vphi$-twisted torsion which we summarise in the following remark.

\begin{remarkpro}
\label{rem:torcalc}
Let $Y$ be a 3-dimensional closed connected oriented manifold, and
let $E$ be a cellular decomposition of $Y$ with one $0$-cell, $w$ $1$-cells
$e_1, \ldots e_w$, $w$ $2$-cells $f_1, \ldots, f_w$, and one $3$-cell.
Suppose that every cell is endowed with an orientation.
Choose relations $b_i$ that represent the boundaries of the
$2$-cells in their homotopy classes.

Let $A = ([\fd_i b_j])_{i,j}$ be the matrix of the abelianised Fox
derivatives of the relations $b_j$, and let $\Delta^{r,s}$ denote the
determinant of the minor of $A$ obtained by deleting the $s$-th row and
the $r$-th column.

Let $h_s$ be the homology class of the $1$-cell $e_s$ in $H=\H_1(Y;\Z)$,
and let $g_r \in H$ be the homology class of a loop in $Y$
that intersects once positively the $2$-cell $f_r$ and is disjoint from
the other $2$-cells.

Then, there exists an Euler structure $\mf t$ on $Y$ such that, for each ring
homomorphism $\vphi:~\Q[H]~\to~\K$ with $\vphi(g_r-1)\neq0$, $\vphi(h_s-1)\neq0$
and $\vphi(\Delta^{r,s})\neq0$, the $\vphi$-torsion is given by
\begin{equation}
\label{eq:torcalc}
\tau^\vphi(Y, \mf t)= \pm \, \frac{\vphi(\Delta^{r,s})}{\vphi(h_s-1) \, \vphi(g_r-1)}.
\end{equation}
\end{remarkpro}

\begin{proof}[Proof of Lemma \ref{lem:torsion}]
We apply Remark \ref{rem:torcalc} to the case of $Y$ being $\Sigma(n,p,q)$,
the double branched cover of the knot $K(n,p,q) \in \td\F(n,p_0,q_0)$.
The loops $e_i$ and the relations $b_i$ are the ones that appear in Equation
\eqref{eq:presepi1}. We choose $r=s=4$. Then $h_4=e_4$ and $g_4=e_4^{-1}$.

By Remark \ref{rem:tdF}.(iii) we know that
$H\cong \quotient{\Z}{\left((2n+1)^2\right)}$ and that $[e_4]$ is a generator
of $H$. Thus, we can define the ring homomorphism
\[\vphi:\Q[H] \cong \Q\left[\quotient{\Z}{\left((2n+1)^2\right)}\right] \longrightarrow \Q(\zeta) = \K\]
that carries $[e_4]$ to some $(2n+1)$-th primitive root of unity $\zeta$.

Then, Remark \ref{rem:torcalc} says that (provided that $\vphi(\Delta^{4,4}) \neq0$)
there exists a Spin$^\C$ structure $\mf t_p$ such that
\begin{equation}
\label{eq:tordelta}
\tau^\vphi(\Sigma(n,p,q), \mf t_p)= \frac{\pm1}{(\zeta-1)\cdot(\zeta^{-1}-1)} \cdot \vphi(\Delta^{4,4}) = R\cdot \vphi(\Delta^{4,4}),
\end{equation}
where $R$ is a real number $\neq0$.

In order to compute $\vphi(\Delta^{4,4})$, we have to understand what
the image of every $[e_i]$ is. Recall that after some column moves,
the matrix $M_{n,p,q}$ appears as in Equation \eqref{eq:presmatr}.

The relation given by the third column implies that
\[\varphi([e_2])=\vphi([e_4])=\zeta\]
and the relations given by the second and the fourth columns imply that
\[\vphi([e_1])=\vphi([e_3])=\zeta^{n+1}.\]

A computation then shows that, if $A$ is the matrix of the abelianised Fox
derivatives, $\vphi(A)$ is of the form
\[
\left(
\begin{matrix}
-p-1-\zeta^{n+1} & \zeta^{n+1} & p & \vphi\left([\de_1 b_4]\right) \\
1 & -q - \frac{1-\zeta^{-(n+1)}}{1-\zeta^{-1}} & 0 & \vphi\left([\de_2 b_4]\right) \\
p & 0 & 1 + \zeta^{n+1} -p & \vphi\left([\de_3 b_4]\right) \\
\vphi\left([\de_4 b_1]\right) & \vphi\left([\de_4 b_2]\right) & \vphi\left([\de_4 b_3]\right) & \vphi\left([\de_4 b_4]\right)
\end{matrix}
\right).
\]

From this, one can easily compute $\vphi(\Delta^{4,4})$ in terms of $p$ and $q$.
By using the relation $q=-p+p_0+q_0$, we obtain that there is some constant
$C_1 \in \Q(\zeta)$ such that
\[\vphi(\Delta^{4,4})= - (\zeta^{n+1} + \zeta + 1) p + C_1.\]
If $n \geq 2$ we have that $\zeta^{n+1} + \zeta +1 \neq 0$, so $\vphi(\Delta^{4,4})$
vanishes for at most one value of $p$. For all other $p$, by Equation \eqref{eq:tordelta} we obtain that
\[\tau^\vphi(\Sigma(n,p,q), \mf t_p)= -R (\zeta^{n+1} + \zeta + 1) p + C_2,\]
for some constant $C_2 \in \Q(\zeta)$. Notice that the coefficient of $p$ is
non-zero if $n \geq 2$.

We now see that the torsion $\tau^\vphi(\Sigma(n,p,q), \mf t_p)$ is the sum of
a constant term $C_2$ and of a term that varies linearly in $p$. Together with the fact
that the sum of all the rational coefficients of $\tau^\vphi(\Sigma(n,p,q), \mf t_p)$
is $0$ (cf.~\cite[I.1.2]{turaevtorsion}), we deduce that there must exist Spin$^\C$ structures $\mf s_p$ and
$\mf s_p'$ such that for $p \to \pm\infty$ we have
\begin{IEEEeqnarray*}{+l+x*}
\tau(\Sigma(n,p,q), \mf s_p,1) \to \pm\infty; \\
\tau(\Sigma(n,p,q), \mf s_p',1) \to \mp\infty. & \qedhere
\end{IEEEeqnarray*}
\end{proof}

\section{The final step}
\label{sec:finalstep}

In order to show that there are families of knots with unbounded $d$-invariants,
we would like to apply Proposition \ref{prop:dinv}. However, we need all the branched
double covers to be $L$-spaces. By Remark \ref{rem:OS} it is sufficient to look
for thin knots. Recall that a knot is (homologically) thin if its reduced Khovanov
and odd-Khovanov homologies with $\Z$-coefficients and its knot Floer homology
with $\quotient{\Z}{2}$-coefficients are free modules and are supported in a single $\delta$-grading.
Some thin knots $K(n,p,q)$ can be found by applying \cite[Theorem 5.3]{champanerkar2012note},
as explained in the following lemma.

\begin{lemma}
\label{lem:QA}
For every $n \geq 2$ and for every $q_0$ such that $|q_0|<n+1$,
$K(n,0,q_0)$ is quasi-alternating, hence thin.
\end{lemma}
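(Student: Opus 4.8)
The plan is to invoke the twisting criterion for quasi-alternating links of Champanerkar and Kofman \cite[Theorem 5.3]{champanerkar2012note}. Recall that such criteria produce quasi-alternating links by working at a crossing $c$ at which the link is \emph{quasi-alternating}, that is, a crossing whose two resolutions $L_0$ and $L_\infty$ are themselves quasi-alternating and satisfy $\det L=\det L_0+\det L_\infty$ with $\det L_0,\det L_\infty>0$. So I would first fix $n\geq2$, take the standard diagram of $K(n,0,q_0)$ coming from the braid $\beta_n=\sigma_1\sigma_2^{-1}\sigma_1^n$ (with no half-twists on the left clasp, since $p=0$), and single out the crossing $c$ at which to apply the theorem.

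The choice of $c$ is already constrained by Lemma \ref{lem:determinants}: since $\det K(n,p,q)=(2n+1)^2$ is constant in the clasp parameters, a crossing \emph{inside} the $q_0$-clasp can never be quasi-alternating, because one of its resolutions reproduces $K(n,0,q_0\mp1)$, again of determinant $(2n+1)^2$, forcing the complementary resolution to have vanishing determinant. Hence $c$ must be taken among the crossings coming from the two $\beta_n$-boxes; a natural choice is a crossing in one of the $\sigma_1^n$ blocks, whose two resolutions respectively peel off one positive half-twist and cap it off. Because resolving a single crossing breaks the symmetry between the two boxes, the resulting links $L_0,L_\infty$ are no longer Kanenobu knots, and their determinants must be read from their own reduced white graphs rather than from $M_{n,p,q}$.

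The crux of the argument is the determinant bookkeeping, and this is where the hypothesis $|q_0|<n+1$ must enter. The leading split at $c$ is insensitive to $q_0$ (for instance one expects $(2n+1)^2=(2n-1)^2+8n$, both summands positive), so the constraint cannot surface there; rather, it should appear when one verifies that $L_0$ and $L_\infty$ are themselves quasi-alternating. I would therefore run the recursion downward, at each stage computing the relevant determinants by the white-graph / presentation-matrix method of Section \ref{sec:presentations}: each resolution alters the reduced white graph of Figure \ref{fig:rwgknpq} in a controlled way, so its determinant is again the absolute value of an explicit integer matrix modelled on $M_{n,p,q}$. I expect a determinant occurring at some stage to factor as $(n+1-q_0)(n+1+q_0)=(n+1)^2-q_0^2$, so that the positivity demanded by the quasi-alternating recursion becomes exactly $|q_0|<n+1$. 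Pinning down this factorisation, checking the base of the recursion (the smallest admissible $n$, which should reduce to an alternating two-bridge piece), and keeping \emph{every} intermediate link inside the quasi-alternating world despite the box asymmetry is the main obstacle.

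Once the hypotheses of \cite[Theorem 5.3]{champanerkar2012note} are verified throughout the range $|q_0|<n+1$, the theorem yields that $K(n,0,q_0)$ is quasi-alternating. The second assertion is then immediate: quasi-alternating knots are homologically thin, so their reduced Khovanov, odd-Khovanov and knot Floer homologies are free and supported in a single $\delta$-grading, which is precisely the notion of thin recalled before Remark \ref{rem:OS}.
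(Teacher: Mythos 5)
There is a genuine gap here: your argument is a strategy whose crux is explicitly deferred, and it misses the one structural observation that makes the paper's proof short. The cited result \cite[Theorem 5.3]{champanerkar2012note} is not the twisting theorem that requires exhibiting a quasi-alternating crossing and verifying that both resolutions are quasi-alternating with additive determinants; it is a ready-made criterion for \emph{Montesinos links}. The key point, which your proposal never reaches, is that once $p=0$ the knot $K(n,0,q_0)$ \emph{is} a Montesinos knot (with the $p$-clasp absent, the two braid boxes and the remaining clasp assemble into rational tangles), so for $q_0\neq0$ the criterion applies off the shelf and its hypotheses translate exactly into $0\neq|q_0|<n+1$ --- no white-graph determinant bookkeeping, no downward recursion, no factorisation to pin down. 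Your proposed route, by contrast, leaves its essential step unproved: you choose a crossing in a $\sigma_1^n$ block, but you never verify that the resolutions $L_0$ and $L_\infty$ are quasi-alternating (a problem just as hard as the original statement, since these are no longer Kanenobu-type knots), you only ``expect'' the factorisation $(n+1)^2-q_0^2$ rather than derive it, and you supply no base case for the recursion. You flag all of this yourself as ``the main obstacle,'' which is an accurate self-assessment: as written, the proposal is a plan, not a proof. Your preliminary observation about clasp crossings is correct (by Lemma \ref{lem:determinants} one resolution of a clasp crossing has determinant $(2n+1)^2$, forcing the other resolution's determinant to vanish), but it only rules out the easy crossings and does not advance the verification at the braid-box crossings.

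A second, smaller omission: the case $q_0=0$ is included in the statement ($|q_0|<n+1$) but excluded from the Montesinos criterion, and the paper handles it separately and elementarily: $K_\beta(0,0)=B_\beta\#\bar{B_\beta}$ is a connected sum of two $2$-bridge knots, which are alternating \cite{goodrick1972two}, hence $K_\beta(0,0)$ is alternating, hence quasi-alternating by \cite[Lemma 3.2]{bdcs} and thin by \cite{manolescu2007khovanov}. Your recursion, even if completed, would still need to account for this connected-sum case (or observe, as your expected factorisation suggests, that it survives the positivity constraint at $q_0=0$), and your proposal does not address it. The final sentence of your write-up --- quasi-alternating implies thin in the sense recalled before Remark \ref{rem:OS} --- is correct and matches the paper.
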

For a definition of quasi-alternating links, see \cite[Definition 3.1]{bdcs}.
\begin{proof}
For the case of $q_0=0$, we prove that $K_\beta(0,0)$ is alternating
for every $\beta$. Notice that $K_\beta(0,0)=B_\beta \# \bar{B_\beta}$.
$B_\beta$ and $\bar{B_\beta}$ are alternating because they are 2-bridge
knots (cf.~\cite{goodrick1972two}), so $K_\beta(0,0)$ is alternating as well.
As every alternating knot is quasi-alternating (cf.~\cite[Lemma 3.2]{bdcs}), and 
every quasi-alternating knot is thin (cf.~\cite{manolescu2007khovanov}),
this concludes the proof in the case of $q_0=0$.

When $q_0\neq0$, the knot $K(n,0,q_0)$ is a Montesinos knot.
\cite[Theorem 5.3]{champanerkar2012note} then implies that $K(n,0,q_0)$
is quasi-alternating (hence thin) when $0 \neq |q_0|<n+1$.
\end{proof}

We can now prove Theorem \ref{thm:main}, that we restate here in a more precise form.

{
\renewcommand{\theproposition}{\ref{thm:main}}
\begin{theorem}
Let $n \geq 2$ and $q_0$ be integers satisfying $3 {\not|} \,\gcd(q_0, 2n+1)$
and\linebreak $|q_0| <n+1$.
Then the family $\td\F(n,0,q_0)$ is an infinite family of thin knots with determinant
$(2n+1)^2$, same Khovanov, odd-Khovanov and knot Floer homologies, and for every
$K(n,p,q) \in \td\F(n,0,q_0)$ there exist Spin$^\C$ structures $\mf s_p$
and $\mf s_p'$ on $\Sigma(n,p,q)$ such that for $p \to \pm\infty$ we have
\begin{IEEEeqnarray*}{l}
\label{eq:main}
d(\Sigma(n,p,q), \mf s_p) \to \pm\infty; \IEEEyesnumber \IEEEyessubnumber \label{eq:maina}\\
d(\Sigma(n,p,q), \mf s_p') \to \mp\infty. \IEEEyessubnumber \label{eq:mainb}
\end{IEEEeqnarray*}
\end{theorem}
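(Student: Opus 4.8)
The plan is to assemble the pieces already in place: once the hypotheses are checked to be mutually consistent, the unboundedness of the $d$-invariants follows formally from that of the Turaev torsion. First I would note that setting $p_0=0$ gives $p_0+q_0=q_0$, so the hypothesis $3\nmid\gcd(q_0,2n+1)$ is precisely the condition $3\nmid\gcd(2n+1,p_0+q_0)$ demanded by Remark \ref{rem:tdF} and Lemma \ref{lem:torsion}. Remark \ref{rem:tdF} then tells me that $\td\F(n,0,q_0)$ is infinite (with $p$ arbitrarily large and arbitrarily small), that each member has determinant $(2n+1)^2$, and that the branched double cover of each has cyclic first homology $\quotient{\Z}{((2n+1)^2)}$; it also records that all members share the same Khovanov, odd-Khovanov and knot Floer homologies.

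Next I would verify that every $\Sigma(n,p,q)$ with $K(n,p,q)\in\td\F(n,0,q_0)$ is an $L$-space, this being the hypothesis needed to run Proposition \ref{prop:dinv}. The assumption $|q_0|<n+1$ lets me invoke Lemma \ref{lem:QA} to produce a single quasi-alternating, hence thin, knot $K(n,0,q_0)$ inside the full family $\F(n,0,q_0)$; since $0+q_0=p_0+q_0$, Remark \ref{rem:OS} guarantees that the branched double cover of every knot in $\F(n,0,q_0)$ --- and a fortiori of every knot in the subfamily $\td\F(n,0,q_0)$ --- is an $L$-space. The same thin knot, together with Theorem \ref{thm:propkan}, shows that every member of the family is itself thin, yielding the first assertion of the theorem.

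With the $L$-space hypothesis secured, I would apply Proposition \ref{prop:dinv} to the family $\F_{\beta_n}(0,q_0)$ to obtain a single constant $\lambda\in\R$, independent of $p$, satisfying $d(\Sigma(K),\mf t)=2\,\tau(\Sigma(K),\mf t,1)-\lambda$ for every $K$ in the family and every Spin$^\C$ structure $\mf t$. Feeding in the structures $\mf s_p$ and $\mf s_p'$ supplied by Lemma \ref{lem:torsion}, for which $\tau(\Sigma(n,p,q),\mf s_p,1)\to\pm\infty$ and $\tau(\Sigma(n,p,q),\mf s_p',1)\to\mp\infty$ as $p\to\pm\infty$, the constancy of $\lambda$ immediately converts these into $d(\Sigma(n,p,q),\mf s_p)\to\pm\infty$ and $d(\Sigma(n,p,q),\mf s_p')\to\mp\infty$, which are the two limits asserted in the statement.

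Since all the genuine work is buried in the preceding lemmas, I anticipate no real obstacle at this stage: the only care required is bookkeeping, namely confirming the inclusion $\td\F(n,0,q_0)\subseteq\F_{\beta_n}(0,q_0)$ so that the single constant $\lambda$ applies uniformly across the subfamily, and noting that the coefficient ``of $1$'' in Lemma \ref{lem:torsion} is literally $\tau(\,\cdot\,,\,\cdot\,,1_{\H_1})$ as used in Proposition \ref{prop:dinv}. Were I to prove the statement from scratch, the difficulty would instead reside in Lemma \ref{lem:torsion} --- the Fox-derivative computation of $\vphi(\Delta^{4,4})$ and the verification that $\zeta^{n+1}+\zeta+1\neq0$ for $n\geq2$ --- which is what makes the torsion grow linearly in $p$.
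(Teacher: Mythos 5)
Your proposal is correct and follows essentially the same route as the paper's own proof, which likewise assembles Remark \ref{rem:tdF}, Lemma \ref{lem:QA} and Theorem \ref{thm:propkan} (for thinness), Remark \ref{rem:OS} (for the $L$-space condition), and then combines Proposition \ref{prop:dinv} with Lemma \ref{lem:torsion}. Your extra bookkeeping --- matching $p_0=0$ with the hypotheses, checking $\td\F(n,0,q_0)\subseteq\F_{\beta_n}(0,q_0)$ so the constant $\lambda$ is uniform, and identifying the coefficient of $1$ --- is exactly what the paper leaves implicit.
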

\addtocounter{proposition}{-1}
}
\begin{proof}
By Remark \ref{rem:tdF}, the family $\td\F(n,0,q_0)$ consists of infinitely many knots
with determinant $(2n+1)^2$ and the same homological invariants. Lemma \ref{lem:QA}
and Theorem \ref{thm:propkan} prove that they are all thin.
Therefore, their branched double covers are
$L$-spaces (see Remark \ref{rem:OS}). Thus, we can apply Proposition \ref{prop:dinv},
which, together with Lemma \ref{lem:torsion}, proves the result.
\end{proof}

A first consequence of Theorem \ref{thm:main} is Theorem \ref{thm:main2}, that we restate here.

{
\renewcommand{\theproposition}{\ref{thm:main2}}
\begin{theorem}
For every odd number $\Delta \geq 5$, there exist infinite families of non-quasi-alternating thin knots
with determinant $\Delta^2$ and the same homological invariants.
\end{theorem}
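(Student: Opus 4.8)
The plan is to reduce Theorem~\ref{thm:main2} directly to the precise form of Theorem~\ref{thm:main}, together with the standard fact that quasi-alternating knots have branched double covers whose correction terms are tightly constrained. First I would fix an odd number $\Delta \geq 5$ and set $n = (\Delta-1)/2$, so that $n \geq 2$ and $2n+1 = \Delta$. To invoke Theorem~\ref{thm:main} I need an integer $q_0$ with $3 \nmid \gcd(q_0, 2n+1)$ and $|q_0| < n+1$; the choice $q_0 = 1$ always works, since then $\gcd(1, \Delta) = 1$ is not divisible by $3$ and $1 < n+1$ for every $n \geq 2$. Applying Theorem~\ref{thm:main} to $(n, 0, q_0)$ produces the infinite family $\td\F(n, 0, 1)$ of thin knots, all of determinant $(2n+1)^2 = \Delta^2$ and sharing the same Khovanov, odd-Khovanov and knot Floer homologies, whose branched double covers $\Sigma(n,p,q)$ carry Spin$^\C$ structures realising $d$-invariants tending to $+\infty$ and to $-\infty$ as $p \to \pm\infty$.

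The remaining, and genuinely external, ingredient is the obstruction to quasi-alternating-ness coming from the correction terms. For a quasi-alternating knot $K$ the branched double cover $\Sigma(K)$ is an $L$-space bounding a sharp negative-definite $4$-manifold, and consequently the values $d(\Sigma(K), \mf t)$ are confined to a set bounded above and below by a constant depending only on $\det(K)$; this is precisely the mechanism exploited by Greene and Watson in \cite{greenewatson}, following \cite{counterexample}. Since every knot in $\td\F(n,0,1)$ has the same determinant $\Delta^2$, a single such bound $B = B(\Delta)$ would have to control the $d$-invariants of all of their branched double covers simultaneously if they were all quasi-alternating.

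I would then finish by contradiction. The family $\td\F(n,0,1)$ is infinite and, by Theorem~\ref{thm:main}, has $d$-invariants that are unbounded both above and below; hence at most finitely many of its members $K(n,p,q)$ can satisfy $|d(\Sigma(n,p,q), \mf t)| \leq B(\Delta)$ for every $\mf t$. Therefore all but finitely many knots in $\td\F(n,0,1)$ fail to be quasi-alternating, while remaining thin with determinant $\Delta^2$ and with fixed homological invariants. Discarding the finitely many exceptions yields the desired infinite family of non-quasi-alternating thin knots.

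The hard part is not in this reduction, which is short, but sits entirely in the two results being combined: the unboundedness of the Turaev torsion coefficients established in Lemma~\ref{lem:torsion} (and transported to the $d$-invariants through Proposition~\ref{prop:dinv}), and the correction-term bound for quasi-alternating knots imported from \cite{greenewatson}. Given those, the only points requiring care are checking that a legitimate $q_0$ exists for every odd $\Delta \geq 5$ — handled by $q_0 = 1$ above — and verifying that removing the finitely many quasi-alternating exceptions destroys neither the infinitude nor the common homological invariants of the family, which is immediate since those invariants are constant across $\td\F(n,0,1)$ by Theorem~\ref{thm:propkan} and Remark~\ref{rem:tdF}.
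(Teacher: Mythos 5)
Your proposal is correct and follows essentially the same route as the paper, whose proof consists precisely of applying Theorem~\ref{thm:main} with $n=\frac{\Delta-1}{2}$ and invoking \cite[Proposition 3]{greenewatson} (the determinant-dependent bound on $d$-invariants of branched double covers of quasi-alternating knots) to conclude that only finitely many knots in the family can be quasi-alternating. You merely make explicit two details the paper leaves implicit --- the choice $q_0=1$ verifying the hypotheses of Theorem~\ref{thm:main}, and the sharp negative-definite mechanism behind the Greene--Watson bound --- both of which are accurate.
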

\addtocounter{proposition}{-1}
}
\begin{proof}
Consider the families given by Theorem \ref{thm:main}, with $n=\frac{\Delta-1}{2}$.
By \cite[Proposition 3]{greenewatson} only finitely many knots in each family can
be quasi-alternating.
\end{proof}

\begin{remark}
We can also require the knots in Theorem \ref{thm:main2} to be hyperbolic.
This can be achieved by using a result by Riley (cf.~\cite[Corollary, page 102]{riley1979elliptical}),
in a similar way as in \cite[Lemma 5]{kanenobu1986infinitely} and \cite[Proposition 11]{greenewatson}.
First notice that the bridge number of $K(n,p,q)$ is $\leq 3$ (it is clear
from the diagram in Figure \ref{fig:rwgknpq}). As explained in the proof
of Theorem \ref{thm:main2}, for $p \gg 0$ and $p \ll 0$ the knots $K(n,p,q)$
in $\td \F(n,0,q_0)$ are not quasi-alternating. In particular, they are not
alternating and hence they are not 2-bridge. Thus, for each $n\geq2$ and for
each $q_0$ such that $|q_0| <n+1$ the bridge number of all but finite knots
in $\td \F(n,0,q_0)$ must be $3$.
Riley's theorem (cf.~\cite[Corollary, page 102]{riley1979elliptical}) then implies
that such knots are either composite, torus knots or hyperbolic.
The possibility of being composite is ruled out by the fact that the first
homology group is cyclic, as in \cite[Proposition 11]{greenewatson}.
Moreover, $K(n,p,q)$ is never a torus knot because it is slice. Thus, for
each $n\geq2$ and for each $q_0$ such that $|q_0| <n+1$ all but finite knots
in $\td \F(n,0,q_0)$ are hyperbolic.
\end{remark}

\section{Manifolds that are not surgery on a knot in $S^3$}
\label{sec:surgery}

The aim of this last section is to prove Theorem \ref{thm:main3}, which we
restate below, as an application of Theorem \ref{thm:main}. The techniques
come from \cite{doig2012finite} and \cite{hoffman2013big}.

{
\renewcommand{\theproposition}{\ref{thm:main3}}
\begin{theorem}
For every odd integer $\Delta\gg0$, there exist infinitely many hyperbolic,
weight 1 manifolds $M_{\Delta,p}$ with $|\H_1(M_{\Delta,p})|=\Delta^2$ that
are not surgery on a knot in $S^3$.
\end{theorem}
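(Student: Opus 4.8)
The plan is to realise the manifolds $M_{\Delta,p}$ as the branched double covers $\Sigma(n,p,q)$ with $2n+1=\Delta$ and $q=q_0-p$, and to verify the three required properties — controlled $\H_1$ together with weight $1$, the obstruction to being surgery, and hyperbolicity — separately, invoking Theorem \ref{thm:main} only for the surgery obstruction. The homological constraint is already available: by Remark \ref{rem:tdF}.(iii) we have $\H_1(\Sigma(n,p,q))\cong\quotient{\Z}{\left((2n+1)^2\right)}$, so $|\H_1(M_{\Delta,p})|=\Delta^2$. For weight $1$ I would argue directly from the presentation \eqref{eq:presepi1}: setting $e_4=1$ in $\pi_1(\Sigma(n,p,q))$, the relations \eqref{eq:relpi2} and \eqref{eq:relpi4} express $e_1$ and $e_3$ as powers of $e_2$, so the quotient $\pi_1(\Sigma(n,p,q))/\langle\langle e_4\rangle\rangle$ is cyclic, hence abelian; since $[e_4]$ generates $\H_1$, this abelianisation is trivial, which forces the quotient itself to be trivial. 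Thus $e_4$ normally generates $\pi_1$ and each $M_{\Delta,p}$ is weight $1$.

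The heart of the argument, and the only place Theorem \ref{thm:main} is used, is the obstruction to being surgery on a knot, following Doig \cite{doig2012finite}. The input is the Ozsv\'ath--Szab\'o (Ni--Wu) surgery formula: with the standard conventions, for $a/b$-surgery with $a>0$ one has
\[
d\!\left(S^3_{a/b}(K),i\right)=d\!\left(L(a,b),i\right)-2\max\{V_{\lfloor i/b\rfloor},\,V_{\lceil(a-i)/b\rceil}\},\qquad V_j\geq0,
\]
so that $d(S^3_{a/b}(K),\cdot)\leq\max_{\mf t}d(L(a,b),\mf t)$. As $|\H_1|=a$ is forced to equal $N:=\Delta^2$, the right-hand side is bounded by a constant $C_N$ depending only on $N$ (the maximum of lens-space $d$-invariants over the finitely many residues $b\bmod N$); applying the same bound to $-M$ handles the case $a<0$ and yields the symmetric lower bound $-C_N$. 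Hence if $M_{\Delta,p}$ were surgery on a knot, then either $\sup_{\mf s}d(M_{\Delta,p},\mf s)\leq C_N$ or $\inf_{\mf s}d(M_{\Delta,p},\mf s)\geq-C_N$. But Theorem \ref{thm:main} supplies, for $|p|\gg0$, Spin$^\C$ structures $\mf s_p,\mf s_p'$ on the \emph{same} manifold $\Sigma(n,p,q)$ with $d(\mf s_p)$ arbitrarily large positive and $d(\mf s_p')$ arbitrarily large negative; both bounds fail simultaneously, so for $|p|\gg0$ the manifold is not surgery on a knot. The same unboundedness shows the $M_{\Delta,p}$ are pairwise non-homeomorphic, so the family is genuinely infinite.

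For hyperbolicity I would adopt the Dehn-filling strategy of Hoffman and Walsh \cite{hoffman2013big}. Each of the twist regions — the two $\sigma_1^{n}$ blocks in $\beta_n$ and $\beta_n^{-1}$, together with the $p$- and $q$-regions — is encircled by an unknotted axis, and inserting $k$ half-twists amounts to $1/k$-surgery on that axis; since each axis has even linking number with $K(n,p,q)$, it lifts to the branched double cover. Consequently every $\Sigma(n,p,q)$ is obtained by Dehn filling a single cusped manifold $N$, independent of $n,p,q$, along slopes that tend to $\infty$ as $n,p,q\to\infty$. Granting that $N$ is hyperbolic, Thurston's hyperbolic Dehn surgery theorem leaves only finitely many exceptional slopes on each cusp: taking $\Delta$ (hence $n$) large makes the fillings of the two $\sigma_1^{n}$-cusps non-exceptional, and then, with $q=q_0-p$ so that $|q|\to\infty$ as $|p|\to\infty$, all but finitely many $p$ make the remaining fillings non-exceptional. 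This produces, for each $\Delta\gg0$, infinitely many hyperbolic $M_{\Delta,p}$, exactly as stated.

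The hard part is the hyperbolicity of the parent manifold $N$: this is a concrete, genuinely computational step. I would carry it out with \texttt{SnapPy} and certify it rigorously (for instance by verified interval arithmetic), or else identify the Hoffman--Walsh manifolds (the case $\Delta=5$) as fillings of $N$ and bootstrap from their computation. An alternative would be to bypass $N$ entirely and invoke the orbifold theorem together with the fact, established in the Remark following Theorem \ref{thm:main2}, that $K(n,p,q)$ is hyperbolic for $|p|\gg0$, arguing that the associated $\pi$-orbifolds are hyperbolic; the obstacle there is excluding the exceptional Seifert-fibred geometries for the orbifold, which is why the Dehn-filling route seems preferable.
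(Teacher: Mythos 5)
Your proposal follows the paper's proof almost exactly: the paper also splits the theorem into three lemmas --- weight $1$ from the presentation \eqref{eq:presepi1} after killing $e_4$, the surgery obstruction from the Ni--Wu inequality $d(S^3_{r/s}(K),i)\leq d(L(r,s),i)$ together with the mirror trick $d(-Y,\mf t)=-d(Y,\mf t)$ for negative slopes and the two-sided unboundedness of Theorem \ref{thm:main}, and hyperbolicity by realising every $\Sigma(n,p,q)$ as a Dehn filling of a fixed cusped parent and invoking Thurston's hyperbolic Dehn surgery theorem. Your weight-$1$ argument is a mild (and correct) repackaging: the paper explicitly derives $e_2^{2q+(n+1)p+(2n+1)}=1$ and $e_2^{2q+(n+1)p}=1$ and uses $\gcd(2q+(n+1)p,2n+1)=1$, whereas you observe the quotient is cyclic on $e_2$ and kill it via the abelianisation; both rest on the same gcd hypothesis (through Remark \ref{rem:tdF}.(iii) in your version). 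The one genuine omission is the step you yourself flag: the hyperbolicity of the parent manifold is not established, and without it the Thurston argument yields nothing. The paper supplies precisely this certificate (Lemma \ref{lem:hyperbolicity}): the parent is the double branched cover $\Sigma(T)$ of the tangle exterior in Figure \ref{fig:tangles}, identified via the Montesinos trick as the complement of the link L10n101, and shown hyperbolic by exhibiting an Orb triangulation by $10$ ideal tetrahedra of shape $e^{\pi i/3}$ with all eight cusp equations verified --- so your ``certify with SnapPy or bootstrap from Hoffman--Walsh'' is exactly the route taken, just not carried out. One small imprecision in your filling picture: an unknotted axis of linking number $2$ with the knot lifts to \emph{two} circles in the branched double cover, so drilling axes would produce an eight-cusped parent with paired equivariant fillings; the paper instead removes the four twist-region balls, whose boundary four-punctured spheres each lift to a single torus, giving the four-cusped $\Sigma(T)$ and one filling per parameter. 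Thurston's theorem tolerates either formulation, so this does not affect the argument, but the paper's version is the cleaner one.
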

\addtocounter{proposition}{-1}
}

Recall that a manifold is called weight 1 if its fundamental group is the
normal closure of one element.
We split the proof of the theorem in 3 lemmas.

\begin{lemma}
\label{lem:hyperbolicity}
For every $n \gg0, |p|\gg0, |q|\gg0$, the manifold $\Sigma(n,p,q)$ is hyperbolic.
\end{lemma}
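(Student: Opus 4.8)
The plan is to prove hyperbolicity of $\Sigma(n,p,q)$ by exhibiting these manifolds as Dehn fillings of a fixed cusped hyperbolic $3$-manifold and then invoking Thurston's hyperbolic Dehn surgery theorem. The key structural observation is that $\Sigma(n,p,q)$ is the branched double cover of $K(n,p,q)$, and the $p$ and $q$ half-twists in the diagram of Figure \ref{fig:rwgknpq} lift, under the branched covering, to \emph{Dehn fillings} along a pair of cusps: each block of $p$ (respectively $q$) half-twists in the knot corresponds to a twisting operation whose branched double cover is obtained by filling a cusp with a slope that varies linearly with $p$ (respectively $q$). Concretely, I would replace the two twist regions by two unknotted circles encircling the strands, forming a link $L_n$ whose branched double cover $W_n := \Sigma(S^3, L_n)$ is a fixed (independent of $p,q$) compact $3$-manifold with two torus boundary components, and then recover $\Sigma(n,p,q)$ as the result of $(1/p)$- and $(1/q)$-type fillings on the two cusps of $W_n$.

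\emph{The steps, in order.} First I would make the branched-cover-as-filling picture precise: removing the $p$- and $q$-twist regions and encircling each with an axis gives a link, and the branched double cover of its complement is $W_n$, a two-cusped manifold; filling the two cusps with slopes determined by $p$ and $q$ yields $\Sigma(n,p,q)$. Second, I would show $W_n$ is hyperbolic for $n \gg 0$ — here it suffices to verify that the interior admits a complete finite-volume hyperbolic structure, which one checks either by SnapPy-style computation for the relevant $n$ or, more robustly, by ruling out essential spheres, tori, and annuli (so that $W_n$ is irreducible, atoroidal, and has no essential annuli, i.e. not Seifert fibred), after which Thurston's geometrisation for Haken manifolds applies. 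Third, with $W_n$ hyperbolic and two-cusped, Thurston's Dehn surgery theorem guarantees that all but finitely many fillings on each cusp are hyperbolic; since the fillings are parametrised by $p$ and $q$ and the excluded slopes form a finite set per cusp, the filled manifold $\Sigma(n,p,q)$ is hyperbolic for all $|p|\gg 0$ and $|q|\gg 0$, giving exactly the quantifier structure $n\gg 0,\ |p|\gg 0,\ |q|\gg 0$ in the statement.

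\emph{The main obstacle} is establishing the hyperbolicity of the fixed parent manifold $W_n$ and correctly identifying the filling slopes. The filling identification requires care about how half-twists on the knot level translate into Dehn filling coefficients upstairs in the branched double cover (the standard fact that adding a full twist along an encircling circle corresponds to $\pm 1$ surgery on its lift, so that $p$ half-twists correspond to a $1/p$-type slope), and one must confirm that as $p,q \to \pm\infty$ these slopes genuinely exhaust all but finitely many slopes — i.e. that the filling slopes leave every compact region of the slope space, which is what feeds Thurston's theorem. Proving $W_n$ hyperbolic is the genuinely hard analytic step: rather than a direct computation I would argue via the incompressibility of the twisting annuli and the absence of essential tori, exploiting that $K(n,p,q)$ is already known (from the earlier remark following Theorem \ref{thm:main2}) to be hyperbolic for $|p|\gg0,|q|\gg0$, so that its complement is hyperbolic and the branched double cover of a hyperbolic knot complement with the relevant symmetry is itself hyperbolic; transferring hyperbolicity across the branched cover, using the orbifold theorem, is the cleanest route and sidesteps a hands-on normal-surface analysis of $W_n$.
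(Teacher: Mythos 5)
Your reduction --- drill the twist regions, pass to the branched double cover, and apply Thurston's hyperbolic Dehn filling theorem --- is exactly the skeleton of the paper's proof, and your observation that the filling slopes leave every compact subset of slope space as $p,q\to\pm\infty$ is the right thing to feed into Thurston's theorem. The genuine gap is in the key step, hyperbolicity of the parent manifold. Because you drill only the $p$- and $q$-twist regions, your parent $W_n$ is an \emph{infinite family} of manifolds indexed by $n$, and the lemma quantifies over $n\gg0$ as well; a ``SnapPy-style computation for the relevant $n$'' can only ever treat finitely many values of $n$ and so cannot close the argument. The paper sidesteps this by drilling all \emph{four} twist regions (the two $n$-twist regions coming from $\beta_n$ and its inverse, in addition to the $p$- and $q$-regions, cf.~Figure \ref{fig:tangles}), so that the parent is a single fixed $4$-cusped manifold $\Sigma(T)$ --- identified with the complement of the link L10n101 --- whose hyperbolicity is then verified once and rigorously: Orb produces a triangulation by $10$ tetrahedra in which every edge has degree $6$, so assigning the shape $e^{\pi i/3}$ to each tetrahedron solves the gluing equations, and the eight cusp equations are checked explicitly to hold for this solution. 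A single application of Thurston's theorem then gives hyperbolicity for all $n,|p|,|q|\gg0$ simultaneously, which is precisely the quantifier structure of the statement.

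Your proposed ``cleanest route'' for establishing hyperbolicity of $W_n$ rests, moreover, on a false principle: the double branched cover of a hyperbolic knot (or of a hyperbolic link complement equipped with the covering symmetry) need not be hyperbolic. The figure-eight knot is hyperbolic, yet its double branched cover is the lens space $L(5,2)$; more generally, every hyperbolic $2$-bridge knot has a lens space as double branched cover. What the orbifold theorem provides is geometrisation of the cone-angle-$\pi$ orbifold under orbifold-irreducibility and orbifold-atoroidality hypotheses, which are \emph{not} implied by hyperbolicity of the underlying knot complement; one would still have to exclude the spherical and Seifert-fibred possibilities, which is exactly the content one is trying to establish. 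There is also a directional mismatch: the remark following Theorem \ref{thm:main2} gives hyperbolicity of $K(n,p,q)$ only for all but finitely many members of each family, whereas you need hyperbolicity of the \emph{unfilled} manifold $W_n$, and inferring a parent's hyperbolicity from that of infinitely many of its fillings is itself a nontrivial step you would have to supply. In short, the Dehn-filling framework is correct, but the hyperbolicity of the parent requires the paper's uniformisation trick (drill the $n$-twist regions as well) together with a one-time rigorous verification, not the transfer argument you sketch.
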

\begin{proof}
The manifold $\Sigma(n,p,q)$ is obtained by quadruple Dehn filling on
the double branched cover $\Sigma(T)$ of the tangle $T$ in Figure \ref{fig:tangles}.
If $\Sigma(T)$ is hyperbolic, then by \cite[Theorem 5.8.2]{thurston1979geometry}
so are the manifolds $\Sigma(n,p,q)$ for $n$, $|p|$ and $|q|$ big enough.

To check the hyperbolicity of $\Sigma(T)$, we input the tangle $T$ into the computer
software Orb (cf.~\cite{heard2005orb}). As explained in \cite[Proof of Lemma 4.7]{hoffman2013big},
Orb can find a triangulation of $\Sigma(T)$, which can be easily detected, among
the options that Orb gives, by its first homology\footnote{$\H_1(\Sigma(T);\Z)$
can be computed as follows. The right part of Figure \ref{fig:tangles} shows
a quadruple tangle filling turning $T$ into the
unknot. The Montesinos trick then implies that $\Sigma(T)$ is obtained by
$S^3$ by removing $4$ solid tori, i.e.~it is a $4$-component link complement
(specifically, SnapPy \cite{snappy} identifies $\Sigma(T)$ with the complement of the link L10n101).
By Alexander duality we have $\H_1(\Sigma(T);\Z) \cong \Z^{\oplus 4}$.}.
Such triangulation consists of $10$ tetrahedra and has degree $6$ at every edge.
Thus, the choice of $e^{\frac{\pi i}{3}}$ as shape parameter for each tetrahedron
gives a hyperbolic structure to $\Sigma(T)$. This hyperbolic structure is complete
if the cusp equations, which are of the form
\begin{equation*}
z_1^{a_1} \cdot \left( \frac{1}{1-z_1} \right)^{b_1} \cdot \left( \frac{z_1-1}{z_1} \right)^{c_1}
\cdot \ldots \cdot
z_{10}^{a_{10}} \cdot \left( \frac{1}{1-z_{10}} \right)^{b_{10}} \cdot \left( \frac{z_{10}-1}{z_{10}} \right)^{c_{10}}
=1
\end{equation*}
and are thought of as equations on the universal cover of $\C^*$, are satified by
the choice of shape parameters given by $z_j=e^{\frac{\pi i}{3}}$ for every $j=1,\ldots10$.
In our case there are $8$ cusp equations (computed with SnapPy \cite{snappy}),
represented by the rows of the following matrix, that we interpret as vectors
$(a_1,b_1,c_1,\ldots,a_{10},b_{10},c_{10})$:
\begin{equation*}
\scalebox{0.65}{\mbox{\ensuremath{\displaystyle{
\left(
\begin{array}{*{30}c}
0 & 1 & 0 & 0 & 0 & 0 & 0 & 0 &-1 & 0 & 1 & 0 &-2 & 0 & 0 & 0 & 0 & 1 & 0 & 0 & 0 & 0 &-1 & 0 & 0 & 0 & 0 & 1 & 0 & 0\\
0 & 1 & 0 & 0 & 0 & 0 & 0 & 0 &-1 & 0 & 1 & 0 &-1 & 0 & 0 &-1 & 0 & 0 &-1 & 0 & 0 & 0 & 0 & 1 & 0 & 1 & 0 & 0 & 0 & 0\\
1 & 0 & 0 & 0 & 0 & 0 & 0 & 0 & 0 & 0 & 0 & 1 &-1 & 0 & 0 & 0 & 0 & 0 & 0 & 0 & 0 & 0 & 0 & 0 & 0 &-1 & 0 & 0 & 0 & 0\\
0 &-1 & 0 & 0 & 0 & 0 & 0 & 0 & 0 & 0 & 0 & 0 & 0 & 0 & 0 & 0 & 0 & 0 &-1 & 0 & 0 & 0 & 1 & 0 & 0 & 0 & 1 & 0 & 0 & 0\\
1 & 0 & 0 & 0 & 0 & 0 &-1 & 0 & 0 & 0 & 0 & 0 & 0 & 0 & 0 & 0 & 0 & 0 & 0 & 0 & 0 & 0 & 0 &-1 & 0 & 0 & 0 & 0 & 0 & 1\\
0 &-1 & 0 &-1 & 0 & 0 & 0 & 0 & 0 & 0 & 0 &-1 & 0 & 0 & 0 & 0 & 0 & 1 & 0 & 0 & 0 & 1 & 0 & 0 & 1 & 0 & 0 & 0 & 0 & 0\\
1 & 0 & 0 & 0 & 0 & 0 & 0 & 0 & 0 & 0 & 0 & 0 & 0 &-1 & 0 & 0 & 1 & 0 & 0 & 0 & 0 & 0 & 0 & 0 &-1 & 0 & 0 & 0 & 0 & 0\\
0 &-1 & 0 & 0 & 0 & 0 & 1 & 0 & 0 & 0 & 0 & 0 & 0 & 0 & 1 & 0 & 0 & 0 & 0 & 0 & 0 & 0 & 0 & 0 & 0 & 0 & 0 & 0 &-1 & 0\\
\end{array}
\right)
}}}}.
\end{equation*}
It is straightforward to check that $z_j=e^{\frac{\pi i}{3}}$ is a solution for all
cusp equations. Thus, the manifold $\Sigma(T)$ admits a complete hyperbolic structure.
\end{proof}

\begin{figure}[t]
\begin{center}
\resizebox{0.8\textwidth}{!}{\begin{tikzpicture}[crossline/.style={preaction={draw=white, -, line width=10pt}}]

\def\u{1.5cm};
\def\h{1.5*\u};
\def\r{\u/2};
\def\rg{3.5*\u};
\def\t{2*\u};
\def\d{\u/2.82};


\begin{scope}[shift={(-1.5*\h,0)}]
\draw (0,\h+2*\u) arc (90:270:\rg);
\draw[very thick] (-\t,0) circle (\r);
\draw (0,\h) .. controls (-\u,\h) and (-\t+\d+\d, \d+\d) .. (-\t+\d, \d);
\draw (0,\h+\u) .. controls (-\h,\h+\u) and (-\t-\d-\r, \d+\r) .. (-\t-\d, \d);
\begin{scope}[yscale=-1]
\draw (0,\h) .. controls (-\u,\h) and (-\t+\d+\d, \d+\d) .. (-\t+\d, \d);
\draw (0,\h+\u) .. controls (-\h,\h+\u) and (-\t-\d-\r, \d+\r) .. (-\t-\d, \d);
\end{scope}
\end{scope}


\begin{scope}[shift={(1.5*\h,0)}, xscale=-1]
\draw (0,\h+2*\u) arc (90:270:\rg);
\draw[very thick] (-\t,0) circle (\r);
\draw (0,\h) .. controls (-\u,\h) and (-\t+\d+\d, \d+\d) .. (-\t+\d, \d);
\draw (0,\h+\u) .. controls (-\h,\h+\u) and (-\t-\d-\r, \d+\r) .. (-\t-\d, \d);
\begin{scope}[yscale=-1]
\draw (0,\h) .. controls (-\u,\h) and (-\t+\d+\d, \d+\d) .. (-\t+\d, \d);
\draw (0,\h+\u) .. controls (-\h,\h+\u) and (-\t-\d-\r, \d+\r) .. (-\t-\d, \d);
\end{scope}
\end{scope}


\begin{scope}[shift={(-1.5*\h,\h)}]

\draw (0,0) -- (\h,0);
\draw (\h,2*\u) -- (2*\h, 2*\u);
\draw (2*\h,0) -- (3*\h,0);

\draw (0,\u) .. controls (\u, \u) and (\h-\u,2*\u) .. (\h,2*\u);
\draw[crossline] (0,2*\u) .. controls (\u, 2*\u) and (\h-\u,\u) .. (\h,\u);
\begin{scope}[shift={(\h,-\u)}]
\draw (0,2*\u) .. controls (\u, 2*\u) and (\h-\u,\u) .. (\h,\u);
\draw[crossline] (0,\u) .. controls (\u, \u) and (\h-\u,2*\u) .. (\h,2*\u);
\end{scope}

\end{scope}

\begin{scope}[shift={(\h,3*\u)}]

\draw (-\h/2,\u/2) .. controls (-\h/2+\h/6,\u/2) and (-1.2*\d,1.2*\d) .. (-\d,\d);
\begin{scope}[xscale=-1]
\draw (-\h/2,\u/2) .. controls (-\h/2+\h/6,\u/2) and (-1.2*\d,1.2*\d) .. (-\d,\d);
\end{scope}
\begin{scope}[yscale=-1]
\draw (-\h/2,\u/2) .. controls (-\h/2+\h/6,\u/2) and (-1.2*\d,1.2*\d) .. (-\d,\d);
\begin{scope}[xscale=-1]
\draw (-\h/2,\u/2) .. controls (-\h/2+\h/6,\u/2) and (-1.2*\d,1.2*\d) .. (-\d,\d);
\end{scope}
\end{scope}

\draw[very thick] (0,0) circle (\r);
\end{scope}


\begin{scope}[shift={(-1.5*\h,-\h)}, yscale=-1]

\draw (0,0) -- (\h,0);
\draw (\h,2*\u) -- (2*\h, 2*\u);
\draw (2*\h,0) -- (3*\h,0);

\draw (0,\u) .. controls (\u, \u) and (\h-\u,2*\u) .. (\h,2*\u);
\draw[crossline] (0,2*\u) .. controls (\u, 2*\u) and (\h-\u,\u) .. (\h,\u);
\begin{scope}[shift={(\h,-\u)}]
\draw (0,2*\u) .. controls (\u, 2*\u) and (\h-\u,\u) .. (\h,\u);
\draw[crossline] (0,\u) .. controls (\u, \u) and (\h-\u,2*\u) .. (\h,2*\u);
\end{scope}

\begin{scope}[shift={(2.5*\h,1.5*\u)}]

\draw (-\h/2,\u/2) .. controls (-\h/2+\h/6,\u/2) and (-1.2*\d,1.2*\d) .. (-\d,\d);
\begin{scope}[xscale=-1]
\draw (-\h/2,\u/2) .. controls (-\h/2+\h/6,\u/2) and (-1.2*\d,1.2*\d) .. (-\d,\d);
\end{scope}
\begin{scope}[yscale=-1]
\draw (-\h/2,\u/2) .. controls (-\h/2+\h/6,\u/2) and (-1.2*\d,1.2*\d) .. (-\d,\d);
\begin{scope}[xscale=-1]
\draw (-\h/2,\u/2) .. controls (-\h/2+\h/6,\u/2) and (-1.2*\d,1.2*\d) .. (-\d,\d);
\end{scope}
\end{scope}

\draw[very thick] (0,0) circle (\r);
\end{scope}

\end{scope}


\begin{scope}[shift={(10*\h,0)}]


\begin{scope}[shift={(-1.5*\h,0)}]
\draw (0,\h+2*\u) arc (90:270:\rg);
\draw[very thick] (-\t,0) circle (\r);
\draw (0,\h) .. controls (-\u,\h) and (-\t+\d+\d, \d+\d) .. (-\t+\d, \d);
\draw (0,\h+\u) .. controls (-\h,\h+\u) and (-\t-\d-\r, \d+\r) .. (-\t-\d, \d);
\begin{scope}[yscale=-1]
\draw (0,\h) .. controls (-\u,\h) and (-\t+\d+\d, \d+\d) .. (-\t+\d, \d);
\draw (0,\h+\u) .. controls (-\h,\h+\u) and (-\t-\d-\r, \d+\r) .. (-\t-\d, \d);
\end{scope}
\end{scope}


\begin{scope}[shift={(1.5*\h,0)}, xscale=-1]
\draw (0,\h+2*\u) arc (90:270:\rg);
\draw[very thick] (-\t,0) circle (\r);
\draw (0,\h) .. controls (-\u,\h) and (-\t+\d+\d, \d+\d) .. (-\t+\d, \d);
\draw (0,\h+\u) .. controls (-\h,\h+\u) and (-\t-\d-\r, \d+\r) .. (-\t-\d, \d);
\begin{scope}[yscale=-1]
\draw (0,\h) .. controls (-\u,\h) and (-\t+\d+\d, \d+\d) .. (-\t+\d, \d);
\draw (0,\h+\u) .. controls (-\h,\h+\u) and (-\t-\d-\r, \d+\r) .. (-\t-\d, \d);
\end{scope}
\end{scope}


\begin{scope}[shift={(-1.5*\h,\h)}]

\draw (0,0) -- (\h,0);
\draw (\h,2*\u) -- (2*\h, 2*\u);
\draw (2*\h,0) -- (3*\h,0);

\draw (0,\u) .. controls (\u, \u) and (\h-\u,2*\u) .. (\h,2*\u);
\draw[crossline] (0,2*\u) .. controls (\u, 2*\u) and (\h-\u,\u) .. (\h,\u);
\begin{scope}[shift={(\h,-\u)}]
\draw (0,2*\u) .. controls (\u, 2*\u) and (\h-\u,\u) .. (\h,\u);
\draw[crossline] (0,\u) .. controls (\u, \u) and (\h-\u,2*\u) .. (\h,2*\u);
\end{scope}

\end{scope}

\begin{scope}[shift={(\h,3*\u)}]

\draw (-\h/2,\u/2) .. controls (-\h/2+\h/6,\u/2) and (-1.2*\d,1.2*\d) .. (-\d,\d);
\begin{scope}[xscale=-1]
\draw (-\h/2,\u/2) .. controls (-\h/2+\h/6,\u/2) and (-1.2*\d,1.2*\d) .. (-\d,\d);
\end{scope}
\begin{scope}[yscale=-1]
\draw (-\h/2,\u/2) .. controls (-\h/2+\h/6,\u/2) and (-1.2*\d,1.2*\d) .. (-\d,\d);
\begin{scope}[xscale=-1]
\draw (-\h/2,\u/2) .. controls (-\h/2+\h/6,\u/2) and (-1.2*\d,1.2*\d) .. (-\d,\d);
\end{scope}
\end{scope}

\draw[very thick] (0,0) circle (\r);
\end{scope}


\begin{scope}[shift={(-1.5*\h,-\h)}, yscale=-1]

\draw (0,0) -- (\h,0);
\draw (\h,2*\u) -- (2*\h, 2*\u);
\draw (2*\h,0) -- (3*\h,0);

\draw (0,\u) .. controls (\u, \u) and (\h-\u,2*\u) .. (\h,2*\u);
\draw[crossline] (0,2*\u) .. controls (\u, 2*\u) and (\h-\u,\u) .. (\h,\u);
\begin{scope}[shift={(\h,-\u)}]
\draw (0,2*\u) .. controls (\u, 2*\u) and (\h-\u,\u) .. (\h,\u);
\draw[crossline] (0,\u) .. controls (\u, \u) and (\h-\u,2*\u) .. (\h,2*\u);
\end{scope}

\begin{scope}[shift={(2.5*\h,1.5*\u)}]

\draw (-\h/2,\u/2) .. controls (-\h/2+\h/6,\u/2) and (-1.2*\d,1.2*\d) .. (-\d,\d);
\begin{scope}[xscale=-1]
\draw (-\h/2,\u/2) .. controls (-\h/2+\h/6,\u/2) and (-1.2*\d,1.2*\d) .. (-\d,\d);
\end{scope}
\begin{scope}[yscale=-1]
\draw (-\h/2,\u/2) .. controls (-\h/2+\h/6,\u/2) and (-1.2*\d,1.2*\d) .. (-\d,\d);
\begin{scope}[xscale=-1]
\draw (-\h/2,\u/2) .. controls (-\h/2+\h/6,\u/2) and (-1.2*\d,1.2*\d) .. (-\d,\d);
\end{scope}
\end{scope}

\draw[very thick] (0,0) circle (\r);
\end{scope}

\end{scope}


\begin{scope}[shift={(1.5*\h+2*\u,0)}]
\draw[color=red, thick] (-\d,\d) .. controls (-\d/2,\d/2) and (\d/2,\d/2) .. (\d,\d);
\draw[color=red, thick] (-\d,-\d) .. controls (-\d/2,-\d/2) and (\d/2,-\d/2) .. (\d,-\d);
\draw[very thick] (0,0) circle (\r);
\end{scope}

\begin{scope}[shift={(-1.5*\h-2*\u,0)},rotate=90]
\draw[color=red, thick] (-\d,\d) .. controls (-\d/2,\d/2) and (\d/2,\d/2) .. (\d,\d);
\draw[color=red, thick] (-\d,-\d) .. controls (-\d/2,-\d/2) and (\d/2,-\d/2) .. (\d,-\d);
\draw[very thick] (0,0) circle (\r);
\end{scope}

\begin{scope}[shift={(\h,3*\u)}]
\draw[color=red, thick] (-\d,\d) .. controls (-\d/2,\d/2) and (\d/2,\d/2) .. (\d,\d);
\draw[color=red, thick] (-\d,-\d) .. controls (-\d/2,-\d/2) and (\d/2,-\d/2) .. (\d,-\d);
\draw[very thick] (0,0) circle (\r);
\end{scope}

\begin{scope}[shift={(\h,-3*\u)},rotate=90]
\draw[color=red, thick] (-\d,\d) .. controls (-\d/2,\d/2) and (\d/2,\d/2) .. (\d,\d);
\draw[color=red, thick] (-\d,-\d) .. controls (-\d/2,-\d/2) and (\d/2,-\d/2) .. (\d,-\d);
\draw[very thick] (0,0) circle (\r);
\end{scope}

\end{scope}
\end{tikzpicture}}
\caption{The tangle $T$ (on the left) and a filling yielding the unknot (on the right).}
\label{fig:tangles}
\end{center}
\end{figure}
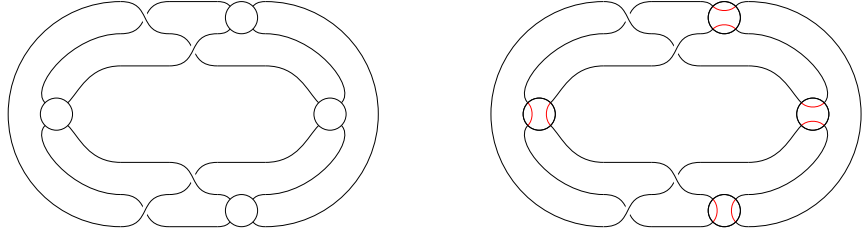

\begin{lemma}
\label{lem:weight1}
For all integers $n,p,q$ such that $\gcd(2q+(n+1)p,2n+1)=1$, the manifold
$\Sigma(n,p,q)$ is weight 1.
\end{lemma}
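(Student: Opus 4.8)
The plan is to exhibit a single element of $\pi_1(\Sigma(n,p,q))$ whose normal closure is the whole group. The natural candidate is one of the standard generators from the presentation \eqref{eq:presepi1}, and I would take $e_4$, precisely because Lemma \ref{lem:bdccyclic} guarantees, exactly under the hypothesis $\gcd(2q+(n+1)p,2n+1)=1$, that the homology class $[e_4]$ generates $\H_1(\Sigma(n,p,q)) \cong \Z/(2n+1)^2$. This is the point where the numerical hypothesis gets used.

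Write $G = \pi_1(\Sigma(n,p,q))$ and let $N = \langle\langle e_4 \rangle\rangle$ be the normal closure of $e_4$; the goal is to show $G/N$ is trivial. First I would impose $e_4 = 1$ in the relations and simplify. In the relation $b_2$ of \eqref{eq:relpi2} the factor $(e_4 e_2^{-1})^q$ collapses to $e_2^{-q}$, yielding $e_1 = e_2^{n+q+1}$; likewise $b_4$ of \eqref{eq:relpi4} collapses to $e_2^q e_3^{-1}$, yielding $e_3 = e_2^q$. Thus in $G/N$ both $e_1$ and $e_3$ are powers of $e_2$, so $G/N$ is generated by the single element $e_2$; in particular $G/N$ is cyclic, hence abelian.

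The final step converts this abelianness into the homological conclusion. Since $G/N$ is abelian we have $[G,G] \subseteq N$, so $G/N$ is a quotient of $G^{\mathrm{ab}} = \H_1(\Sigma(n,p,q))$ by a surjection that sends $[e_4]$ to the identity. But $[e_4]$ generates $\H_1$ by Lemma \ref{lem:bdccyclic}, so this surjection must be the zero map and $G/N$ is trivial. Hence $G = N$ is the normal closure of $e_4$, which is exactly the statement that $\Sigma(n,p,q)$ is weight $1$.

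The elimination of $e_1$ and $e_3$ as powers of $e_2$ is a routine manipulation of the braid words, and the two points deserving care are checking that manipulation from \eqref{eq:relpi2} and \eqref{eq:relpi4} and noting that the hypothesis enters only through Lemma \ref{lem:bdccyclic}. I expect the main conceptual obstacle to be recognising that the relations $b_1$ and $b_3$ never need to be analysed at all: once $G/N$ is seen to be generated by one element, the homological input that $[e_4]$ generates $\H_1$ closes the argument with no further group-theoretic computation. The temptation would be to try to directly prove $G/N$ is trivial by presentation gymnastics, which is both harder and unnecessary.
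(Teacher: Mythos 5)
Your proof is correct, and its first half is identical to the paper's: both arguments kill $e_4$ and use $b_2$ and $b_4$ to rewrite $e_1 = e_2^{q+n+1}$ and $e_3 = e_2^{q}$, so that $G/N$ is generated by the image of $e_2$. Where you genuinely diverge is the concluding step. The paper stays inside the presentation: it substitutes into $b_1$ and $b_3$, which become $e_2^{2q+(n+1)p+(2n+1)}=1$ and $e_2^{2q+(n+1)p}=1$, and since the two exponents have $\gcd$ equal to $\gcd(2q+(n+1)p,\,2n+1)=1$ it concludes $e_2=1$ directly. You instead note that a one-generator quotient is abelian, so $G/N$ is a quotient of $G^{\mathrm{ab}} = \H_1(\Sigma(n,p,q))$ by the image of $N$, which is exactly the subgroup $\langle [e_4]\rangle$; Lemma \ref{lem:bdccyclic} then says $[e_4]$ generates $\H_1$ under the stated hypothesis, forcing $G/N$ to be trivial. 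This is a valid argument, and it is worth seeing what each route buys. Yours isolates a clean reusable principle --- if $G/\langle\langle x \rangle\rangle$ is abelian and $[x]$ generates $G^{\mathrm{ab}}$, then $G$ is normally generated by $x$ --- and it never touches $b_1$ and $b_3$, outsourcing the $\gcd$ arithmetic to the homology lemma where it was already carried out on the presentation matrix. The paper's route is self-contained at the group-presentation level and exhibits explicitly, via the two relations on $e_2$, where the hypothesis enters; the cost is redoing arithmetic that Lemma \ref{lem:bdccyclic} already encodes. One cosmetic remark: your phrase ``the surjection must be the zero map'' should read ``the trivial map,'' since $\H_1$ is written multiplicatively here; this does not affect the argument.
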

\begin{proof}
We prove that $G=\quotient{\pi_1(\Sigma(n,p,q))}{\ll e_4 \gg}$ is trivial.
A presentation for $G$ is obtained
from \eqref{eq:presepi1} by adding the relation $e_4=1$ to the set of relations
\eqref{eq:relpi}. First, the relations $b_2$ and $b_4$ respectively become
$e_1=e_2^{q+n+1}$ and $e_3=e_2^q$. Then, the relations $b_1$ and $b_3$ respectively
become $e_2^{2q+(n+1)p+(2n+1)}=1$ and $e_2^{2q+(n+1)p}=1$. As $\gcd(2q+(n+1)p,2n+1)=1$,
we obtain $e_2=1$, from which we deduce that the group $G$ is trivial.
\end{proof}

\begin{lemma}
\label{lem:notsurgery}
Let $n \geq 2$ and $q_0$ be integers satisfying $3 {\not|} \,\gcd(q_0, 2n+1)$
and $|q_0| <n+1$. Then, only for finitely many knots $K(n,p,q)$ in the family
$\td \F(n,0,q_0)$ the branched double cover $\Sigma(n,p,q)$ is surgery on a knot in $S^3$.
\end{lemma}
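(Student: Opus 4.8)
The plan is to use the $d$-invariants as an obstruction, in the spirit of Doig \cite{doig2012finite} and Hoffman--Walsh \cite{hoffman2013big}: the key fact is that the $d$-invariants of a manifold that is surgery on a knot in $S^3$ are, for a fixed order of $\H_1$, bounded on one side by a constant depending only on that order, whereas Theorem \ref{thm:main} makes the $d$-invariants of $\Sigma(n,p,q)$ diverge in \emph{both} directions. First I would recall the Ozsv\'ath--Szab\'o rational surgery formula for the $d$-invariants \cite{ozszabsolutely}: if $Y=S^3_{a/b}(K)$ with $a/b>0$, then for every $\SpinC$ structure the value $d(Y,\mf t)$ equals the corresponding value $d(L(a,b),\mf t)$ of the lens space $S^3_{a/b}(U)$ minus twice a non-negative correction term (coming from the knot's invariants $V_j\geq0$). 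In particular, positivity of these corrections gives $d(Y,\mf t)\leq\max_{\mf t'}d(L(a,b),\mf t')$ for every $\mf t$.

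Next I would convert this into a bound depending only on $\Delta=2n+1$. Since $|\H_1(S^3_{a/b}(K))|=|a|$, if the branched double cover $\Sigma(n,p,q)$ (which has $|\H_1|=(2n+1)^2=\Delta^2$) is surgery on a knot, then necessarily $|a|=\Delta^2$ and the surgery coefficient is $\pm\Delta^2/b$. As $L(\Delta^2,b)$ depends only on $b$ modulo $\Delta^2$, there are finitely many such lens spaces, each with finitely many $\SpinC$ structures, so
\[
C=C(\Delta):=\max\left\{d(L(\Delta^2,b),\mf t)\ :\ \gcd(b,\Delta^2)=1,\ \mf t\in\SpinC(L(\Delta^2,b))\right\}
\]
is a finite constant. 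Hence, if $\Sigma(n,p,q)$ is \emph{positive} surgery on a knot then $d(\Sigma(n,p,q),\mf t)\leq C$ for all $\mf t$; and if it is \emph{negative} surgery, then applying the same bound to $-\Sigma(n,p,q)$ (which is positive surgery on the mirror knot), together with $d(-Y,\mf t)=-d(Y,\bar{\mf t})$, yields $d(\Sigma(n,p,q),\mf t)\geq -C$ for all $\mf t$.

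Finally I would invoke Theorem \ref{thm:main}. Its hypotheses $3 {\not|}\,\gcd(q_0,2n+1)$ and $|q_0|<n+1$ are exactly those of the present lemma, so it applies and provides, for all but finitely many $p$, $\SpinC$ structures $\mf t_+$ and $\mf t_-$ on $\Sigma(n,p,q)$ with $d(\Sigma(n,p,q),\mf t_+)>C$ and $d(\Sigma(n,p,q),\mf t_-)<-C$ (these come from the two divergent families $\mf s_p,\mf s_p'$, whose values tend to $\pm\infty$ as $|p|\to\infty$). The first inequality rules out $\Sigma(n,p,q)$ being positive surgery, and the second rules out its being negative surgery; since $|\H_1(\Sigma(n,p,q))|=\Delta^2\neq0$ forces any surgery coefficient to be non-zero, and hence of one of these two signs, $\Sigma(n,p,q)$ is not surgery on a knot in $S^3$. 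Thus only the finitely many remaining knots in $\td\F(n,0,q_0)$ can have this property.

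The hard part will be setting up the one-sided bound with the correct orientation conventions: one must track how the sign of the surgery coefficient, the direction of the inequality produced by $V_j\geq0$, and the behaviour of $d$ under orientation reversal interact, so that the positive-surgery case is contradicted by $\mf t_+$ and the negative-surgery case by $\mf t_-$. Once $C(\Delta)$ is in place, the remaining ``all but finitely many'' bookkeeping is routine.
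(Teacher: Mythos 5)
Your proposal is correct and follows essentially the same route as the paper: an upper bound on the $d$-invariants of positive surgeries via lens space correction terms (the inequality $d(S^3_{r/s}(K),i)\leq d(L(r,s),i)$, which the paper cites directly from Ni--Wu rather than re-deriving from the surgery formula and $V_j\geq 0$), the mirror/orientation-reversal trick for negative surgeries, and Theorem \ref{thm:main}'s two-sided divergence to rule out all but finitely many $p$. The only cosmetic difference is your explicit constant $C(\Delta)$ where the paper simply observes that finitely many lens spaces yield finitely many values.
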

\begin{proof}
Let $K(n,p,q) \in \td\F(n,0,q_0)$, and suppose that $\Sigma(n,p,q)$ is a
surgery on some knot $K \in S^3$ with slope $\frac{r}{s} \geq 0$ (we will
deal with the case of negative slope afterwards). First notice that, by
homology, $r=(2n+1)^2$.

By \cite[Theorem 2.5]{niwu}, there is an identification
$\SpinC(S^3_{\quotient{r}{s}}(K)) \cong \quotient{\Z}{r} \cong \SpinC(L(r,s))$
such that for all $i \in \quotient{\Z}{r}$
\begin{equation}
\label{eq:dinv}
d(S^3_{\quotient{r}{s}}(K), i) \leq d(L(r,s), i).
\end{equation}
As there are only finitely many lens spaces with first homology $\quotient{\Z}{r}$,
$d(L(r,s), i)$ can only take finitely many values.
This gives an upper bound to each $d$-invariant $d(S^3_{\quotient{r}{s}}(K), i)$.
In view of Equations \eqref{eq:main}, only for finitely many $K(n,p,q) \in \td\F(n,0,q_0)$,
the manifold $\Sigma(n,p,q)$ can be $S^3_{\quotient{r}{s}}(K)$
for some knot $K$ in $S^3$ and some $\frac{r}{s}\geq0$.

For the case of negative surgery, it is sufficient to notice
that if $\Sigma(n,p,q)$ is a negative surgery on a knot $K$,
then $-\Sigma(n,p,q)$ is a positive surgery on $\bar K$, the
mirror image of $K$. Now the equality $d(-Y, \mf t) = -d(Y, \mf t)$
(cf.~\cite[Proposition 4.2]{ozszabsolutely})
and Equation \eqref{eq:dinv} give a lower bound to the $d$-invariants
of $\Sigma(n,p,q)$. By Equations \eqref{eq:main} this is
possible only for finitely many $K(n,p,q)$.
\end{proof}

\begin{proof}[{Proof of Theorem \ref{thm:main3}}]
Consider a family $\td\F(n,0,q_0)$ satisfying the same hypotheses of
Theorem \ref{thm:main}. Define $M_{\Delta,p}=\Sigma(n, p, q_0-p)$, where $\Delta=2n+1$.
Lemmas \ref{lem:hyperbolicity}, \ref{lem:weight1} and \ref{lem:notsurgery} imply that, for $|p|\gg0$ and $n \gg 0$, $M_{\Delta,p}$
is hyperbolic, weight $1$ and it is not surgery on a knot in $S^3$.
\end{proof}

\bibliographystyle{alpha}
{\scriptsize \bibliography{bibliography}}
\end{document}